\tikzstyle{dmatrix}=[matrix of math nodes,row sep=2.5em, column sep=2.5em,
\DeclareSymbolFont{epsilon}{OML}{ntxmi}{m}{it}
\DeclareMathSymbol{\epsilon}{\mathord}{epsilon}{"0F}
\theoremstyle{plain}
\newtheorem{theorem}{Theorem}[section]
\newtheorem{lemma}[theorem]{Lemma}
\newtheorem{prop}[theorem]{Proposition}
\newtheorem{prop/Def}[theorem]{Propsition/Definition}
\newtheorem{theorem/Def}[theorem]{Theorem/Definition}
\theoremstyle{definition}
\newtheorem{Def}[theorem]{Definition}
\newtheorem{rem}[theorem]{Remark}
\newtheorem{exa}[theorem]{Example}
\def \Q {{\mathbb Q}}
\def \R {{\mathbb R}}
\def \Z {{\mathbb Z}}
\def \D {{\pmb{D}}}
\def \S {{\mathbb S}}
\def \P {{\mathbb P}}
\def \T {{\mathbb T}}
\def \X {{ X_{\Sigma}}}
\def \f11 {{\frac{\log(u_v\bar{u}_v)\log(\nu_v\bar{\nu}_v)}{\log(u_v\bar{u}_v)+\log(\nu_v\bar{\nu}_v)}}}
\def \vol {{ \operatorname{vol}}}
\def \Im {{ \operatorname{Im}}}
\def \relint {{ \operatorname{relint}}}
\def \max {{ \operatorname{max}}}
\def \dom {{ \operatorname{dom}}}
\def \convhull {{ \operatorname{convhull}}}
\def \MV {{ \operatorname{MV}}}
\def \V {{ \operatorname{V}}}
\def \I {{ \operatorname{I}}}
\def \d {{ \operatorname{d}}}
\def \Ca {{ \operatorname{Ca}}}
\def \exposed {{ \operatorname{exposed}}}
\def \timess {{ \operatorname{-times}}}
\tikzstyle{dmatrix}=[matrix of math nodes,row sep=2.5em, column sep=2.5em,
\numberwithin{equation}{section}
\title{Canonical decomposition of a difference of convex sets}
\author{Ana Mar\'ia Botero}
\date{}
\begin{document}

\maketitle

{\small{\begin{abstract}
Let $N$ be a lattice of rank $n$ and let $M = N^{\vee}$ be its dual lattice. In this note we show that given two compact, bounded, full-dimensional convex sets $K_1 \subseteq K_2 \subseteq M_{\R} \coloneqq M \otimes_{\Z} \R$, there is a canonical convex decomposition of the difference $K_2 \setminus K_1$ and we interpret the volume of the pieces geometrically in terms of intersection numbers of toric $b$-divisors. 

\end{abstract}}}

\section{Introduction}
Convex sets have been widely and successfully used to explore the geometry of an algebraic variety using convex geometrical methods. A well known class of examples comes from the theory of toric varieties, where the combinatorics of a lattice polypte encrypts most of the geometric properties of the corresponding projective toric variety (see \cite{CLS} and \cite{ful}). More generally, Okounkov bodies (in the literature often called Newton--Okounkov bodies) are convex sets which one can attach to an algebraic variety together with some extra geometric data, e.g.~a complete flag of subvarieties. These convex sets turn out to encode also important geometric information of the variety (see \cite{OK1,OK2} and also \cite{KK, KK2, KKa} and \cite{LM} and the references therein). 

More recently, generalizing the toric situation, in \cite{botero}, convex sets are associated to so called toric $b$-divisors, which can be though of as a limit of toric divisors keeping track of birational information. Their degree is defined as a limit. There it is shown that under some
positivity assumptions toric b-divisors are integrable and that their
degree is given as the volume of a convex set. Moreover, it is shown that the
dimension of the space of global sections of a nef toric $b$-divisor
is equal to the number of lattice points in this convex set and
a Hilbert--Samuel type formula for its asymptotic growth is given. This generalizes
classical results for classical toric divisors on toric varieties. Finally, a relation between convex bodies associated to $b$-divisors and Okounkov bodies is established. We remark that the main motivation for studying toric $b$-divisors is to be able to do arithmetic intersection theory on mixed Shimura varieties of non-compact type. Indeed, it turns out that toric $b$-divisors locally encode the singularities of the invariant metric on an automorphic line bundle over a toroidal compactification of a mixed Shimura variety of non-compact type. This note is part of an overall program to develop an arithmetic intersection theory on mixed Shimura varieties of non-compact type via convex geometric methods whose starting point is \cite{botero}.

In general, of a particular interest is to compute the volume of a convex set. Aside from this being intrinsically a question of great interest, it has applications not only in the above mentioned geometric settings, but also in other mathematical fields such as in convex optimization.

For the rest of this introduction let us fix a lattice $N$ of rank $n$, its dual lattice $M = N^{\vee}$, and two compact, bounded, full-dimensional convex sets $K_1 \subseteq K_2 \subseteq M_{\R} = M \otimes_{\Z} \R$.  
The aim of this note is to show there is a canonical convex decomposition of the difference $K_2 \setminus K_1$ and to interpret geometrically the volume of the pieces in terms of intersection numbers of toric $b$-divisors. 

The outline of this note is as follows. In Section \ref{section-duality} we recall the Legendre--Fenchel duality for convex sets. Most of the definitions and statements which we will state regarding this duality can be found in \cite{ROCK}. We also refer to \cite[Chapter 2]{BPS}. 

In Section \ref{section-decomposition} we give the canonical convex decomposition of the difference $K_2 \setminus K_1$. We start by defining what it means for two faces $F_1$ and $F_2$ of $K_1$ and $K2$, respectively, to be related, denoted by $F_1 \sim F_2$. Using this relationship, we are able to show the following main result of this section, which is Theorem \ref{convexdecom} in the text.
\begin{theorem}
Let notations be as above. Then we have that
\[
\Upsilon\left(K_2 \setminus K_1\right) \coloneqq \left\{\convhull\left(F_{1}, F_{2}\right) \, \big{|} \, F_{1} \overset{\exposed}{\leq} K_1, \, F_{2} \overset{\exposed}{\leq} K_2 \, \text{ and }\, F_{1} \sim F_{2} \right\}
\]
is a convex decomposition of the difference $K_2 \setminus K_1$.
\end{theorem}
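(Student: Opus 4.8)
The plan is to pass from the description of the pieces in terms of related faces to a one-parameter Minkowski interpolation between $K_1$ and $K_2$, for which the covering statement becomes transparent. Recall that a proper exposed face of a bounded convex body $K$ is a set of the form $F_K(u) = \{x \in K : \langle u, x\rangle = h_K(u)\}$ for some $u \in N_\R \setminus \{0\}$, where $h_K$ is the support function, and that $F_1 \sim F_2$ holds precisely when $F_1$ and $F_2$ are exposed by a common direction, i.e.\ there is a $u$ with $F_1 = F_{K_1}(u)$ and $F_2 = F_{K_2}(u)$. The first step is the identity
\[
\convhull\left(F_{K_1}(u),\, F_{K_2}(u)\right) = \bigcup_{t \in [0,1]} F_{K_t}(u), \qquad K_t \coloneqq (1-t)K_1 + tK_2,
\]
which follows from the fact that the exposed face of a Minkowski combination is the corresponding Minkowski combination of the exposed faces, together with $\convhull(A \cup B) = \bigcup_{t\in[0,1]}\bigl((1-t)A + tB\bigr)$ for convex $A,B$. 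Consequently the union of all pieces of $\Upsilon$ equals $\bigcup_{u\neq 0}\bigcup_{t} F_{K_t}(u) = \bigcup_{t\in[0,1]} \partial K_t$, since the proper exposed faces of a body sweep out exactly its boundary.

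First I would record the interpolation properties and the easy inclusion: one checks $K_1 \subseteq K_t \subseteq K_2$ and $\interior K_1 \subseteq \interior K_t$ for all $t$, and that $t \mapsto K_t$ is continuous and monotone increasing in the Hausdorff metric. A short support-function computation shows each piece is contained in $K_2 \setminus \interior K_1$: a point $(1-\lambda)p_1 + \lambda p_2$ with $p_i \in F_{K_i}(u)$ satisfies $\langle u, \cdot\rangle \ge h_{K_1}(u)$ (because $h_{K_2}\ge h_{K_1}$), with equality only on the supporting hyperplane of $K_1$, so it avoids $\interior K_1$ while lying in $K_2$. For the reverse inclusion I reduce, via the identity above, to proving $K_2 \setminus \interior K_1 = \bigcup_{t\in[0,1]} \partial K_t$. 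Given $x \in K_2 \setminus \interior K_1$, set $t(x) = \inf\{t \in [0,1] : x \in K_t\}$, which is well defined since $x \in K_2$; a continuity argument for the nested family $K_t$ then shows $x \in \partial K_{t(x)}$, whence $x$ lies in some $F_{K_{t(x)}}(u)$ and therefore in the piece $\convhull(F_{K_1}(u), F_{K_2}(u))$. This establishes that $\Upsilon$ covers $K_2 \setminus \interior K_1$, the closed region underlying the claimed decomposition.

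It remains to show the elements of $\Upsilon$ form a genuine convex complex, i.e.\ that any two pieces meet in a common face and that the full-dimensional pieces have pairwise disjoint interiors. For this I would use the Legendre--Fenchel duality of Section \ref{section-duality} to set up an order-reversing bijection between related pairs $F_1 \sim F_2$ and the cells of the common refinement $\Sigma$ of the dual complexes of $K_1$ and $K_2$: the pair $(F_{K_1}(u), F_{K_2}(u))$ corresponds to the cell of $\Sigma$ whose relative interior contains $u$. Under this correspondence the full-dimensional pieces match the minimal cells of $\Sigma$, and incidences of pieces should mirror incidences of cells. I would then transfer the complex structure of $\Sigma$ to $\Upsilon$, proving that $P \cap P'$ is again a piece (the one attached to the cell generated by the two corresponding cells) and that distinct minimal cells yield pieces with disjoint interiors.

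The hard part will be exactly this last step: controlling $\convhull(F_1, F_2) \cap \convhull(F_1', F_2')$ when the two related pairs differ, since convex hulls of faces can a priori overlap in ways not immediately visible from the support functions. The cleanest route I see is again through the interpolation: a point in such an intersection lies on $\partial K_t \cap \partial K_{t'}$ for the relevant parameters and is exposed simultaneously by directions drawn from both cells, which forces it into the face attached to their common refinement cell. Degenerate configurations --- where $K_1$ and $K_2$ share a facet, or where the bodies are non-polytopal with infinitely many faces --- will require care, and it is there that the Legendre--Fenchel formalism of Section \ref{section-duality} does the real work.
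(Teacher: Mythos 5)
Your proposal correctly handles two of the three things that need proving, and for one of them it takes a genuinely different route from the paper. The containment of each piece in $K_2\setminus \interior(K_1)$ is essentially the same support-function computation the paper uses (the paper records it as $\convhull(F_1,F_2)\cap K_1=F_1$). For the covering statement, the paper argues pointwise: given $m'\in K_2\setminus K_1$ it takes the smallest exposed face $F_2\leq K_2$ containing $m'$, picks $\tau\in\relint(\sigma_{F_2})$, and lets $F_1$ be the exposed face of $K_1$ with $\tau\in\relint(\sigma_{F_1})$. You instead interpolate via $K_t=(1-t)K_1+tK_2$, use $F_{K_t}(u)=(1-t)F_{K_1}(u)+tF_{K_2}(u)$ to identify each piece with $\bigcup_{t}F_{K_t}(u)$, and reduce the covering to $\bigcup_{t}\partial K_t=K_2\setminus\interior(K_1)$ via the parameter $t(x)=\inf\{t: x\in K_t\}$. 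The facts you invoke about support sets of Minkowski combinations are standard, and this route is arguably more robust than the paper's, which is delicate for points of $\interior(K_2)\setminus K_1$ (where the only exposed face of $K_2$ containing the point is $K_2$ itself).

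The gap is in the part that actually makes $\Upsilon(K_2\setminus K_1)$ a convex decomposition in the sense of the paper's definition, namely that any two pieces are either disjoint or meet in a common face. You describe a strategy (transfer the incidence structure of the common refinement of $\Sigma_{K_1}$ and $\Sigma_{K_2}$ to the pieces, and analyze $\partial K_t\cap\partial K_{t'}$), but you explicitly defer it as \enquote{the hard part} and never carry it out; as sketched it must also contend with the fact that $\partial K_t\cap\partial K_{t'}$ can be nonempty for $t\neq t'$ when $K_1$ and $K_2$ share boundary, so the reduction to a single parameter value is not automatic, and with the claim that the resulting intersection is a \emph{face} of both pieces rather than merely a lower-dimensional subset. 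The paper closes this step directly: since $F_1\sim F_2$ and $F_1'\sim F_2'$ are cut out by common linear functionals $v$ and $v'$, related faces lie in parallel hyperplanes, so a hyperplane separating $F_1$ from $F_1'$ can be chosen to simultaneously separate $F_2$ from $F_2'$, which forces $\convhull(F_1,F_2)\cap\convhull(F_1',F_2')=\convhull(F_1\cap F_1',F_2\cap F_2')$, a common face. Some completed version of this argument (or of your dual-complex transfer) is required; without it the proposal shows only that the pieces cover the region and sit inside it, not that they form a decomposition.
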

In the polyhedral case, the above canonical decomposition gives a polyhedral subdivision of the complement of two polytopes, one contained in the other. This subdivision appears in the literature (e.g. in \cite{GP}) although it is constructed using the so called pushing method. We haven't found in the literature the method we used in Theorem \ref{convexdecom} nor have we found such a canonical decomposition in the non-polyhedral case.

In Section \ref{section-measures}, we start by recalling the definition of toric $b$-divisors from \cite{botero} and, in the nef case, their connection with convex sets. We then recall the definition of the surface area measure (and a mixed version thereof) associated to a convex set (and to a collection of convex sets) for which our main reference is the survey of Schneider \cite{BM}. Finally, we relate this measure to the intersection theory of toric $b$-divisors. 

In Section \ref{section-volume}, we give a geometric interpretation of the above canonical decomposition in terms of intersection numbers of toric $b$-divisors in the case that $K_2$ is polyhedral. The main result of this section is the following, which is Theorem \ref{ii} in the text. 
\begin{theorem}
Let notations be as above and assume that $K_2$ is a polytope. Let $\D_1$ and $\D_2$ be the nef toric $b$-divisors on $X_{\Sigma_{K_2}}$ corresponding to the convex sets $K_1$ and $K_2$, respectively, where $\Sigma_{K_2}$ denotes the normal fan of $K_2$. Then we can express the difference $\D_2^n - \D_1^n$ of degrees of the nef toric $b$-divisors $\D_1$ and $\D_2$ as a finite sum of correction terms 
\[ 
\D_2^n - \D_1^n = \sum_{F \overset{\exposed}{\leq} K_2} c_F,
\]
where the sum is over all exposed faces of $K_2$. The correction terms $c_F$ are related to top intersection numbers of toric $b$-divisors by
\begin{align*}
c_F & = \sum_{i=0}^{n-1}(n-1)!\int_{\relint(\sigma_F)\cap \S^{n-1}}\left(\phi_1(u)-\phi_2(u)\right)S(\underbrace{K_1, \dotsc, K_1}_{i \timess}, \underbrace{K_2, \dotsc, K_2}_{(n-1-i) \timess},u),
\end{align*}
where $S(\cdot )$ is the mixed surface area measure associated to a collection of convex sets; and where $\phi_1$ and $\phi_2$ denote the support functions of the convex sets $K_1$ and $K_2$, respectivly.

\end{theorem}
\hspace{4mm}

\noindent\textbf{Acknowledgements} I would like to thank J.~I.~Burgos for enlightning discussions and several corrections and comments on earlier drafts of this note.
\hspace{7mm}

\section{Legendre--Fenchel duality}\label{section-duality}
Throughout this section, $N$ will denote a lattice of rank $n$ and $M = N^{\vee}$ its dual lattice. All convex sets in $M_{\R}$ are assumed to be compact and full-dimensional. We start with some definitions.

\begin{Def}
Let $K$ be a convex set in $M_{\R}$. A convex subset $F \subseteq K$ is called a \emph{face} of $K$ if, for every closed line segment $\left[m_1,m_2\right] \subseteq K$ such that $\relint\left(\left[m_1,m_2\right]\right) \cap F \neq \emptyset$, the inclusion $\left[m_1,m_2\right] \subseteq F$ holds. A non-empty subset $F \subseteq K$ is called an \emph{exposed face} of $K$ if there exists a $v \in N_{\R}$ such that
\[
F = \left\{m \in K \, \big{|} \, \langle v,m \rangle = \min_{m' \in K}\langle v,m'\rangle \right\}.
\]
\end{Def}
\begin{rem}
Every exposed face is a face. However, not every face is exposed, as can be seen in the figure \ref{exposed}. Here, the star is a non-exposed face.
\end{rem}
 \begin{figure}[ht!]
\begin{center}
\begin{tikzpicture}[scale=1.5]
\draw (0,0)--(1,0);
  \draw (0,0)--(0,1);
  \draw (1,0)--(1,1);
  \draw (1,1) arc (0:180:0.5cm);
  \draw (1,1) node[right]{Non-exposed face};
   \draw (1,1) node{$\star$};
\end{tikzpicture}
\end{center}
\caption{Example of a non-exposed face}
\label{exposed}
\end{figure}
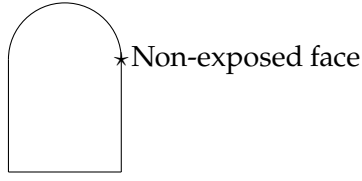

\begin{Def}
Let $\Upsilon$ be a non-empty collection of convex subsets of $M_{\R}$. $\Upsilon$ is called a \emph{convex subdivision} if the following conditions hold:
\begin{enumerate}
\item Every face of an element of $\Upsilon$ is also in $\Upsilon$.
\item\label{decomposition} Every two elements of $\Upsilon$ are either disjoint or they intersect in a common face.
\end{enumerate}
If only (\ref{decomposition}) is satisfied, then we call $\Upsilon$ a \emph{convex decomposition}. Let $\Upsilon$ be a convex subdivision or decomposition of $M_{\R}$. The \emph{support} of $\Upsilon$ is the set $|\Upsilon| \coloneqq \bigcup_{C \in \Upsilon}C$. We say $\Upsilon$ is \emph{complete} if its support is the whole of $M_{\R}$. For a given subset $E \subseteq M_{\R}$, if $|\Upsilon| = E$, we say $\Upsilon$ is a convex subdivision or decomposition of $E$.

\end{Def}
\begin{exa}
The set of all faces of a convex set $K$ is a convex subdivision of $K$. The set of all exposed faces of a convex set $K$ is a convex decomposition of $K$.
\end{exa}
A concave function is said to be \emph{closed} if it is upper semicontinuous. This includes the case of continuous, concave functions defined on compact convex sets. The \emph{support function} of a (not necessarily bounded) convex set $K$ is the function 
\[
\phi_K \colon N_{\R} \longrightarrow \underline{\R} \; (= \R \cup \{-\infty\})
\]
given by the assignment
\[
 v \longmapsto\inf_{m \in K}\langle m,v\rangle.
\] 
It is a closed, concave, conical function.

Also, recall that the \emph{Legendre--Fenchel dual} of a concave function $f\colon N_{\R} \to \underline{\R}$  is the closed, concave function 
\[
f^{\vee}\colon M_{\R} \longrightarrow \underline{\R},
\]
defined by 
\[
 m \longmapsto\inf_{v \in N_{\R}}\left(\langle m,v\rangle -f(v)\right),
\]
whose domain is the so called \emph{stability set} of $f$, which is denoted by $K_f$.

Moreover, the \emph{indicator function} of a convex set $K\subseteq M_{\R}$ is the concave function 
\[
\iota_K \colon M_{\R} \longrightarrow \underline{\R}
\]
defined by 
\[
\iota_K(m) = \begin{cases} 0 &\text{ if } m \in K, 
\\ -\infty &\text{ if } m \notin K. \end{cases}
\]

The following useful Remark can be found in \cite[Section 2.1]{BPS}.
\begin{rem}\label{gotitas} Let $K \subseteq M_{\R}$ be a convex set and let $\iota_K \colon M_{\R} \to \underline{\R}$ be its indicator function. Then we have that $\phi_K = \iota_K^{\vee}$ and $\phi_K^{\vee}= \iota_{K}$. Hence, the Legendre--Fenchel duality gives a bijective correspondence between indicator functions of compact convex sets in $M_{\R}$ and closed, concave, conical functions on $N_{\R}$.
\end{rem}

\begin{Def} Let $f$ be a concave function on $N_{\R}$. The \emph{sup-differential} $\partial f(u)$ of $f$ at $u \in N_{\R}$ is defined by 
\[
\partial f(u)\coloneqq \left\{m\in M_{\R} \, \big{|} \, \langle m, u-v\rangle \geq f(u)-f(v), \, \forall \, v \in N_{\R} \right\},
\]
if $f(u) \neq -\infty$, and $\emptyset$ if $f(u) = -\infty$. 
\end{Def}
This is a generalization to the non-smooth setting of the \emph{gradient} of a smooth function at a point. Note that in general, the sup-differential may contain more than one point. 
\begin{Def} We say that $f$ is \emph{sup-differentiable} at a point $u \in N_{\R}$ if $\partial f(u) \neq \emptyset$. The \emph{effective domain} of $f$ is the set of points where $f$ is sup-differentiable. We denote it by $\dom(\partial f)$. For a subset $V\subseteq N_{\R}$, the set $\partial f(V)$ is defined by  
\[
\partial f(V) \coloneqq \bigcup_{u\in V}\partial f(u).
\]
In particular, the \emph{image} of $\partial f$ is given by $\Im(\partial f) = \partial f(N_{\R})$. 
\end{Def}
The following propositions can be found in \cite[Section 30]{ROCK}.
\begin{prop} The sup-differential $\partial f(u)$ is a closed, convex set for all $u \in \dom(\partial f)$. It is bounded if and only if $u \in \relint(\dom(f))$. Moreover, the effective domain of $f$ is close to being convex, in the sense that 
\[
\relint(\dom(f)) \subseteq \dom(\partial f) \subseteq \dom(f).
\]
In particular, if $\dom(f) = N_{\R}$, we have $\dom(\partial f) = N_{\R}$. 
\end{prop}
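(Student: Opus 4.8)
The plan is to establish the four assertions separately, using nothing more than the description of $\partial f(u)$ as the solution set of a family of linear inequalities, together with a single separation argument for the hypograph of $f$. The convexity and closedness are the formal part: by definition $m \in \partial f(u)$ must satisfy the defining inequality for every $v$, and for each fixed $v \in \dom(f)$ this is one closed affine half-space condition on $m$, while for $v \notin \dom(f)$ it imposes no constraint. Hence
\[
\partial f(u) = \bigcap_{v \in \dom(f)} \left\{ m \in M_{\R} \;\big|\; \langle m, u-v\rangle \geq f(u)-f(v) \right\},
\]
an intersection of closed convex sets, so $\partial f(u)$ is closed and convex, settling the first claim.

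For the inclusions $\relint(\dom(f)) \subseteq \dom(\partial f) \subseteq \dom(f)$, the right-hand one is immediate: if $u \notin \dom(f)$ then $f(u) = -\infty$ and $\partial f(u) = \emptyset$ by definition, so $u \notin \dom(\partial f)$. The left-hand inclusion is the one genuinely analytic step. I would pass to the hypograph $H = \{(v,t) \in N_{\R} \times \R \mid t \leq f(v)\}$, which is convex because $f$ is concave. For $u \in \relint(\dom(f))$ the point $(u,f(u))$ lies on the boundary of $H$, so a supporting hyperplane exists; the crucial point is that it can be chosen non-vertical, since a vertical supporting hyperplane would separate $u$ from $\dom(f)$ inside their common affine hull, contradicting $u \in \relint(\dom(f))$. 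The linear part of such a non-vertical supporting hyperplane is exactly an element of $\partial f(u)$, so $\partial f(u) \neq \emptyset$.

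For the boundedness criterion I would compute the recession cone of $\partial f(u)$. A nonzero $m_0$ is a recession direction precisely when adding any nonnegative multiple of it preserves all the defining inequalities, which amounts to $\langle m_0, u-v\rangle \geq 0$ for every $v \in \dom(f)$; equivalently, the linear functional $\langle m_0, \cdot \rangle$ attains its maximum over $\dom(f)$ at $u$, i.e. $m_0$ lies in the normal cone of $\dom(f)$ at $u$. Thus $\partial f(u)$ is bounded if and only if this normal cone is trivial, and (the domains occurring here being full-dimensional) this is exactly the condition that $u$ be an interior, equivalently a relative interior, point of $\dom(f)$; at relative boundary points the outer normals furnish nonzero recession directions. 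The final assertion is then formal: if $\dom(f) = N_{\R}$ then $\relint(\dom(f)) = N_{\R}$, and the two inclusions just proved collapse to $\dom(\partial f) = N_{\R}$.

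The step I expect to be the main obstacle is the left-hand inclusion, and more precisely the verification that the supporting hyperplane at $(u,f(u))$ may be taken non-vertical. This is exactly where the hypothesis $u \in \relint(\dom(f))$ (rather than merely $u \in \dom(f)$) is indispensable, and it is also the geometric input that underlies the recession-cone computation governing boundedness.
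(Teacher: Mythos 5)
Your argument is correct, and it is essentially the standard one: the paper offers no proof of this proposition but simply cites Rockafellar (the relevant result is his Theorem 23.4 together with the basic facts on subdifferentials), and your route --- half-space intersection for closedness and convexity, a non-vertical supporting hyperplane of the hypograph for $\relint(\dom(f)) \subseteq \dom(\partial f)$, and the recession-cone computation identifying unbounded directions of $\partial f(u)$ with normal vectors of $\dom(f)$ at $u$ --- is precisely the textbook proof. Two small points. First, to rule out the vertical hyperplane you should invoke \emph{proper} separation of $(u,f(u))$ from the hypograph (possible exactly because this point is not in the relative interior of the hypograph): a vertical supporting hyperplane at a point of $\relint(\dom(f))$ is not contradictory per se, but it necessarily contains the whole hypograph, which proper separation forbids. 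Second, your parenthetical about full-dimensionality is well taken: the recession-cone argument really gives boundedness if and only if $u \in \interior(\dom(f))$, and the statement with $\relint$ is only correct because the concave functions occurring in this paper (support and indicator functions of full-dimensional convex sets, and their duals) have full-dimensional domains; for a general concave $f$ with lower-dimensional domain, $\partial f(u)$ is unbounded even at relative interior points.
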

\begin{prop}
If $f$ is closed, then we have that $\Im(\partial f) = \dom(\partial f^{\vee})$. Hence, the image of the sup-differential is close to being convex, in the sense that 
\[
\relint(K_f) \subseteq \Im(\partial f) \subseteq K_f.
\]

\end{prop}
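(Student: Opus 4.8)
The plan is to reduce both assertions to a single symmetric description of the sup-differential via the Legendre--Fenchel dual, and then to feed the preceding proposition into the function $f^{\vee}$. First I would record the Fenchel--Young inequality: for every $u \in N_{\R}$ and $m \in M_{\R}$,
\[
f(u) + f^{\vee}(m) \leq \langle m, u\rangle,
\]
which is immediate since $f^{\vee}(m) = \inf_{v}\left(\langle m, v\rangle - f(v)\right)$ is bounded above by the value of this expression at $v = u$. The crucial point is that $m \in \partial f(u)$ holds exactly when this inequality is an equality: unwinding the defining condition of $\partial f(u)$ shows that it asserts precisely that $u$ attains the infimum defining $f^{\vee}(m)$, i.e. that $f(u) + f^{\vee}(m) = \langle m, u\rangle$.

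Next I would exploit the symmetry of this equality. Since $f$ is closed, the biconjugation theorem (see \cite{ROCK}) gives $f^{\vee\vee} = f$, so applying the same characterization to the closed concave function $f^{\vee}$ --- whose dual is again $f$ --- shows that $u \in \partial f^{\vee}(m)$ is equivalent to the very same equality $f^{\vee}(m) + f(u) = \langle m, u\rangle$. Combining the two characterizations yields the symmetric equivalence
\[
m \in \partial f(u) \iff u \in \partial f^{\vee}(m).
\]

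With this equivalence the first identity is purely formal, and I would simply compute
\begin{align*}
\Im(\partial f) &= \left\{m \in M_{\R} \mid \exists\, u \in N_{\R}, \ m \in \partial f(u)\right\} \\
&= \left\{m \in M_{\R} \mid \exists\, u \in N_{\R}, \ u \in \partial f^{\vee}(m)\right\} \\
&= \left\{m \in M_{\R} \mid \partial f^{\vee}(m) \neq \emptyset\right\} = \dom(\partial f^{\vee}),
\end{align*}
the last equality being the definition of the effective domain of $f^{\vee}$. For the sandwich I would apply the previous proposition to $g = f^{\vee}$, obtaining $\relint(\dom(f^{\vee})) \subseteq \dom(\partial f^{\vee}) \subseteq \dom(f^{\vee})$; since $\dom(f^{\vee}) = K_f$ by definition of the stability set and $\dom(\partial f^{\vee}) = \Im(\partial f)$ by the identity just proved, this is exactly $\relint(K_f) \subseteq \Im(\partial f) \subseteq K_f$.

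The genuine content, as opposed to bookkeeping, is the equivalence $m \in \partial f(u) \iff u \in \partial f^{\vee}(m)$, and specifically its backward direction. Passing from the Fenchel--Young equality to membership in $\partial f^{\vee}$ requires identifying $(f^{\vee})^{\vee}$ with $f$, which is exactly where closedness of $f$ is indispensable: without it one only has $f^{\vee\vee} \geq f$, and the inclusion $\dom(\partial f^{\vee}) \subseteq \Im(\partial f)$ may fail.
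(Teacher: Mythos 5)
Your proof is correct, and it is precisely the standard argument --- the Fenchel--Young characterization $m \in \partial f(u) \Leftrightarrow f(u) + f^{\vee}(m) = \langle m, u\rangle$, symmetrized via the biconjugation theorem $f^{\vee\vee} = f$ for closed $f$, then combined with the preceding proposition applied to $f^{\vee}$ --- that underlies the source the paper cites for this statement (Rockafellar, to which the paper defers without giving its own proof). One caveat: your ``unwinding'' of the definition of $\partial f(u)$ is valid for the standard sup-differential inequality $\langle m, v-u\rangle \geq f(v)-f(u)$, whereas the inequality as literally printed in the paper has the sign reversed (evidently a typo, since the printed version would make $u$ a maximizer rather than a minimizer of $v \mapsto \langle m, v\rangle - f(v)$ and would render $\partial f(u)$ empty even for smooth concave $f$), so you should flag that you are using the intended convention.
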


\begin{Def}
Let $f$ be a closed, concave function on $N_{\R}$. We denote by $\Upsilon(f)$ the collection of all sets of the form 
\[
C_m \coloneqq \partial f^{\vee}(m) \subseteq \mathcal{P}(N_{\R}),
\]
for $m \in \dom\left(f^{\vee}\right) \subseteq M_{\R}$.
\end{Def}
The following is \cite[Proposition 2.2.8]{BPS}.
\begin{prop}
Let $f$ be a closed, concave function on $N_{\R}$. Then $\Upsilon(f)$ is a convex decomposition of $\dom(\partial f)$. In particular, if $\dom(f) = N_{\R}$, then $\Upsilon(f)$ is complete. 
\end{prop}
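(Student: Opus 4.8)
The plan is to reduce the statement to the Fenchel--Young inequality between $f$ and its dual $f^{\vee}$, and then to obtain the decomposition property by producing one explicit exposing covector. First I would set up the conjugacy dictionary. Since $f$ is closed concave we have $f^{\vee\vee}=f$, and from the definition of $f^{\vee}$ the Fenchel--Young inequality $f(u)+f^{\vee}(m)\le \langle m,u\rangle$ holds for all $u\in N_{\R}$ and $m\in M_{\R}$, with equality if and only if $u\in \partial f^{\vee}(m)$ (equivalently $m\in\partial f(u)$); see \cite[Section 30]{ROCK}. Writing $a_m(v)\coloneqq \langle m,v\rangle-f^{\vee}(m)$ for the affine function attached to $m\in\dom(f^{\vee})$, this yields the contact-set description
\[
C_m=\partial f^{\vee}(m)=\left\{u\in N_{\R}\, \big{|}\, f(u)=a_m(u)\right\},\qquad a_m\ge f \text{ on } N_{\R}.
\]
Applying the sup-differential proposition to $f^{\vee}$, each $C_m$ is closed and convex, so $\Upsilon(f)$ is a collection of convex subsets of $N_{\R}$, nonempty whenever $f$ is proper.

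Next I would pin down the support. A point $u$ lies in some $C_m$ precisely when $m\in\partial f(u)$ for some $m$, i.e. when $\partial f(u)\neq\emptyset$; moreover $\partial f^{\vee}(m)\neq\emptyset$ forces $f^{\vee}(m)\neq-\infty$, so the restriction $m\in\dom(f^{\vee})$ in the definition of $\Upsilon(f)$ is automatic. Taking the union over all such $m$ then gives $|\Upsilon(f)|=\dom(\partial f)$ exactly. If in addition $\dom(f)=N_{\R}$, the earlier proposition gives $\dom(\partial f)=N_{\R}$, which is precisely the completeness claim.

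The heart of the argument is condition (2): if $C_m\cap C_{m'}\neq\emptyset$, it must be a common face of $C_m$ and $C_{m'}$. For $u\in C_m$ we have $f(u)=\langle m,u\rangle-f^{\vee}(m)$, while Fenchel--Young applied to $m'$ gives $\langle m',u\rangle-f(u)\ge f^{\vee}(m')$. Subtracting, I obtain
\[
\langle m'-m,\,u\rangle\ \ge\ f^{\vee}(m')-f^{\vee}(m)\qquad\text{for all } u\in C_m,
\]
with equality exactly when $u\in C_{m'}$ as well, i.e. exactly on $C_m\cap C_{m'}$. Thus, as soon as the intersection is nonempty, the linear functional $\langle m'-m,\cdot\rangle$ attains its minimum over $C_m$ precisely on $C_m\cap C_{m'}$, which exhibits $C_m\cap C_{m'}$ as the exposed face of $C_m$ cut out by the covector $m'-m\in M_{\R}$ (the roles of $M$ and $N$ being interchangeable here). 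By symmetry, exchanging $m$ and $m'$, the same set is the exposed face of $C_{m'}$ cut out by $m-m'$. Since every exposed face is a face, $C_m$ and $C_{m'}$ meet in a common face, establishing (2); together with the support computation this shows $\Upsilon(f)$ is a convex decomposition of $\dom(\partial f)$.

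The step I expect to be the real obstacle is recognizing the correct exposing covector; once the choice $m'-m$ is made, the face property drops out of Fenchel--Young with no recourse to compactness, smoothness, or polyhedrality of $f$. A minor point to handle with care is that nonemptiness of $C_m\cap C_{m'}$ is used twice: it guarantees that the value $f^{\vee}(m')-f^{\vee}(m)$ is actually attained on $C_m$ (so the exposed face is well defined and nonempty) and that $\langle m'-m,\cdot\rangle$ is bounded below there, which is what justifies calling the minimizing set an exposed face in the sense of the definition above.
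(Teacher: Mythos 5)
Your proof is correct. Note that the paper does not prove this proposition at all: it is quoted verbatim from \cite[Proposition 2.2.8]{BPS}, so there is no in-text argument to compare against. What you have written is a sound, self-contained derivation along the standard lines of that reference: the identification of $C_m=\partial f^{\vee}(m)$ with the contact set $\{u \mid f(u)=\langle m,u\rangle-f^{\vee}(m)\}$ via the equality case of the Fenchel--Young inequality (valid because $f$ is closed, so $f^{\vee\vee}=f$ and $m\in\partial f(u)\Leftrightarrow u\in\partial f^{\vee}(m)$), the computation $|\Upsilon(f)|=\dom(\partial f)$, and the key step exhibiting $C_m\cap C_{m'}$, when non-empty, as the exposed face of $C_m$ cut out by the covector $m'-m$ (and of $C_{m'}$ by $m-m'$). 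The one point worth making explicit is the degenerate case $m'-m=0$ or, more generally, $\langle m'-m,\cdot\rangle$ constant on $C_m$: then the ``exposed face'' is all of $C_m$, which is still a face under the paper's definitions, so condition (2) of a convex decomposition is unaffected. Your closing caveat about non-emptiness being needed for the minimum to be attained is exactly right and is the only place where the argument could silently fail if omitted.
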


\begin{Def}
Let $f$ be a closed, concave function on $N_{\R}$. The \emph{Legendre--Fenchel} correspondence of $f$ 
\[
\mathcal{L}f \colon \Upsilon(f) \longrightarrow  \Upsilon(f^{\vee})
\]
is given by the assignment
\[
C \longmapsto  \bigcap_{u \in C} \partial f(u) \quad\left(=\partial f(u_0), \, \text{ for any } u_0 \in \relint(C) \right).
\]
\end{Def}
 
\begin{Def} Let $V$, $V^*$ be subsets of $N_{\R}$ and of $M_{\R}$, respectively. Moreover, let $\Upsilon$, $\Upsilon^*$ be convex decompositions of $V$ and $V^*$, respectively. We say that $\Upsilon$ and $\Upsilon^*$ are \emph{dual} convex decompositions if there exists a bijective map 
\[
\Upsilon \longrightarrow  \Upsilon^*
\]
given by the assignment 
\[
C \longmapsto  C^*
\]
and satisfying the following properties: 
\begin{enumerate}
\item For every $C, D$ in $\Upsilon$ we have that  $C \subseteq D$ if and only if $C^* \supseteq D^*$. 
\item For every $C$ in $\Upsilon$, the sets $C$ and $C^*$ are contained in orthogonal affine spaces of $N_{\R}$ and $M_{\R}$, respectively.
\end{enumerate}
\end{Def}
The following theorem is taken from \cite[Theorem 2.2.12]{BPS}. 
\begin{theorem}\label{leg_fenc_duality}
Let $f$ be a closed, concave function. Then $\mathcal{L}f$ gives a duality between $\Upsilon(f)$ and $\Upsilon(f^{\vee})$ with inverse given by $(\mathcal{L}f)^{-1} = \mathcal{L}f^{\vee}$. 
\end{theorem}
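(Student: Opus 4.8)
The plan is to deduce everything from a single fundamental biconditional together with the combinatorial structure of the decomposition $\Upsilon(f)$ already recorded above. The tool I would set up first is the Fenchel--Young equality: for closed concave $f$ and points $u \in N_{\R}$, $m \in M_{\R}$,
\[
m \in \partial f(u) \iff u \in \partial f^{\vee}(m) \iff f(u) + f^{\vee}(m) = \langle m, u\rangle .
\]
This follows directly from the definitions of $\partial f$ and of $f^{\vee}$: the first and last conditions are equivalent because $m \in \partial f(u)$ says exactly that $u$ realizes the value $f^{\vee}(m)$, and the middle equivalence is then symmetric in $(u,m)$ once one invokes the biconjugation $f^{\vee\vee} = f$ for closed concave functions (the conical case is Remark \ref{gotitas}, the general statement is in \cite{ROCK}).

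First I would prove well-definedness: for $C \in \Upsilon(f)$, say $C = \partial f^{\vee}(m_0)$, and any $u_0 \in \relint(C)$, that $\bigcap_{u \in C}\partial f(u) = \partial f(u_0)$ and that this set is a cell of $\Upsilon(f^{\vee})$. Since $u_0 \in C$ the inclusion $\subseteq$ is trivial, so the content is $\partial f(u_0) \subseteq \partial f(u)$ for every $u \in C$, and this is the crux. Given $m \in \partial f(u_0)$, the biconditional gives $u_0 \in \partial f^{\vee}(m) =: C_m$, so $u_0 \in C \cap C_m$ with both cells lying in the convex decomposition $\Upsilon(f)$. By the decomposition axiom the intersection $C \cap C_m$ is a common face of the two cells; being a face of $C$ that meets $\relint(C)$, it must equal $C$ (a face meeting the relative interior is the whole set), whence $C \subseteq C_m$ and therefore $m \in \partial f(u)$ for all $u \in C$. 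Finally $\partial f(u_0)$ is nonempty (as $u_0 \in \relint(C) \subseteq \dom(\partial f)$) and equals $\partial f^{\vee\vee}(u_0)$ with $u_0 \in \dom(f) = \dom(f^{\vee\vee})$, so it is indeed a cell of $\Upsilon(f^{\vee})$.

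Next I would show that $\mathcal{L}f^{\vee}$ is a two-sided inverse. With $C = \partial f^{\vee}(m_0)$, $u_0 \in \relint(C)$ and $D = \mathcal{L}f(C) = \partial f(u_0)$, note $m_0 \in D$ by the biconditional. Taking the single index $m = m_0$ gives $\mathcal{L}f^{\vee}(D) = \bigcap_{m \in D}\partial f^{\vee}(m) \subseteq \partial f^{\vee}(m_0) = C$. For the reverse inclusion, take $u \in C$ and $m \in D$; by the key inclusion of the previous paragraph $m \in \partial f(u_0) \subseteq \partial f(u)$, so $u \in \partial f^{\vee}(m)$, and as $m \in D$ was arbitrary $u \in \mathcal{L}f^{\vee}(D)$. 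Hence $\mathcal{L}f^{\vee} \circ \mathcal{L}f = \Id_{\Upsilon(f)}$, and by the symmetric argument with $f$ replaced by $f^{\vee}$ (using $f^{\vee\vee} = f$) also $\mathcal{L}f \circ \mathcal{L}f^{\vee} = \Id_{\Upsilon(f^{\vee})}$; in particular $\mathcal{L}f$ is a bijection.

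It remains to verify the two defining properties of a duality. Order-reversal is formal: if $C \subseteq C'$, then intersecting over the larger index set yields $\mathcal{L}f(C') = \bigcap_{u \in C'}\partial f(u) \subseteq \bigcap_{u \in C}\partial f(u) = \mathcal{L}f(C)$, and the converse follows by applying this same monotonicity to $\mathcal{L}f^{\vee}$ and using that it inverts $\mathcal{L}f$. For orthogonality, note that for every $u \in C$ and every $m \in C^{*} := \mathcal{L}f(C)$ one has $m \in \partial f(u)$, so Fenchel--Young gives $\langle m, u\rangle = f(u) + f^{\vee}(m)$. Fixing $m$ and letting $u$ run over $u_0, u_1 \in C$ yields $\langle m, u_1 - u_0\rangle = f(u_1) - f(u_0)$, a quantity independent of $m$; comparing it for two elements $m, m' \in C^{*}$ forces $\langle m - m', u_1 - u_0\rangle = 0$. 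As $u_1 - u_0$ and $m - m'$ sweep out spanning sets of the direction spaces of the affine hulls of $C$ and of $C^{*}$, these affine hulls are orthogonal. I expect the only genuinely delicate point to be the crux inclusion in the well-definedness step, that is, the passage from ``$u_0 \in \relint(C) \cap C_m$'' to ``$C \subseteq C_m$''; once the Fenchel--Young biconditional and the face-meets-relative-interior fact are in place, everything else is formal.
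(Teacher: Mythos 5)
The paper gives no proof of this theorem at all: it is quoted verbatim from \cite[Theorem 2.2.12]{BPS}, so there is nothing internal to compare your argument against. On its own merits your proof is correct and is essentially the standard one. The Fenchel--Young biconditional $m\in\partial f(u)\iff u\in\partial f^{\vee}(m)\iff f(u)+f^{\vee}(m)=\langle m,u\rangle$ is exactly right for closed concave $f$ (the biconjugation $f^{\vee\vee}=f$ is where closedness enters, and you invoke it at the right moment); the crux step --- from $u_0\in\relint(C)\cap C_m$ to $C\subseteq C_m$ via the fact that $C\cap C_m$ is a common face of two cells of the convex decomposition $\Upsilon(f)$ and a face meeting the relative interior is the whole set --- is the genuinely non-formal point, and you have correctly isolated it and correctly justified it by appealing to the previously quoted proposition that $\Upsilon(f)$ is a convex decomposition of $\dom(\partial f)$. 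The remaining verifications (two-sided invertibility using $m_0\in D$ and the crux inclusion, order reversal by monotonicity of intersections applied to both $\mathcal{L}f$ and $\mathcal{L}f^{\vee}$, and orthogonality of affine hulls from $\langle m-m',u_1-u_0\rangle=0$) are all formal and carried out correctly. Two very minor points you could make explicit: that $f^{\vee}$ is itself closed and concave, so the symmetric argument really does apply; and that every nonempty cell $C=\partial f^{\vee}(m_0)$ lies entirely in $\dom(\partial f)$ (via $\Im(\partial f^{\vee})=\dom(\partial f^{\vee\vee})$), which is what guarantees $\partial f(u_0)\neq\emptyset$. Neither is a gap.
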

We make the following remark which can be found in \cite[Proposition 2.1.5]{convex}.
\begin{rem}\label{delta1}
Consider a convex set $K \subseteq M_{\R}$. Let $\phi_K$ be the corresponding closed, concave, conical support function and let $C \in \Upsilon(\phi_K)$. Then, for any $u \in \text{relint}( C)$ we have that $\partial \phi_K(u) \in \Upsilon(\phi_K^{\vee})$ is an exposed face of $K_{\phi_K} = K$. Conversely, every exposed face $F$ of $K$ can be obtained as $\partial \phi_K(u)$ for some $u \in N_{\R}$. Explicitly, consider $m \in \relint(F)$. Then we may take any $u \in \relint\left(\partial h^{\vee}_K(m)\right) = \relint\left(\partial \iota_K(m)\right)$. In particular, if $K$ is bounded, we get a duality between the set of exposed faces of $K$ and a convex decomposition of $N_{\R}$. 
\end{rem}
\begin{exa}\label{exa:folliation}
Let notations be as in Remark \ref{delta1} and assume that $K = P$ is a polytope. Then the Legendre--Fenchel duality gives back the classical duality between the faces of a polytope and the cones of its normal fan $\Sigma_P$.

If $K$ is not polyhedral, our convex decompositions will not be finite, as can be seen in Figure \ref{foll}. Here we have  
\begin{align*}
\phi_K(a,b) = \begin{cases}\frac{ab}{a+b}, \, &\text{ if }a,b \in \R_{\geq 0},\\
\min\{0,a,b\},\, &\text{ otherwise. } 
\end{cases}
\end{align*}
Note that here the convex decomposition of $N_{\R} \simeq \R^2$ gives us a foliation of the positive quadrant by rays.  
\end{exa}
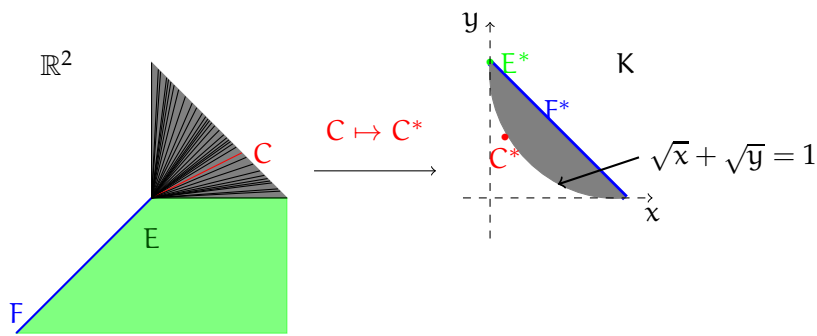
\begin{figure}[h]
\begin{center}
\begin{tikzpicture}[scale=1.8]
\filldraw[gray] (0,0)--(1,0)--(0,1)--cycle;
\filldraw[green, opacity = 0.5] (0,0)--(1,0)--(1,-1)--(-1,-1)--cycle;
\draw (0, -0.3) node[thick, black!70!green, opacity = 1.5]{{\textbf{$E$}}}; 
  \draw (0,0)--(0,1);
  \draw (0,0)--(1,0);
  \draw[thick, blue] (0,0)--(-1,-1);
  \draw(-1,-1) node[above,blue]{{$F$}}; 
  \draw (0,0)--(1/2,1/2);
  \draw (0,0)--(3/4,1/4);
\draw (0,0)--(1/4,3/4);
\draw (0,0)--(3/5,2/5);
\draw (0,0)--(2/5,3/5);
\draw (0,0)--(1/5,4/5);
\draw (0,0)--(4/5,1/5);
\draw (0,0)--(1/6,5/6);
\draw (0,0)--(5/6,1/6);
\draw (0,0)--(3/7,4/7);
\draw (0,0)--(4/7,3/7); 
\draw (0,0)--(1/7,6/7);
\draw (0,0)--(6/7,1/7);
\draw (0,0)--(5/8,3/8);
\draw (0,0)--(3/8,5/8);
\draw (0,0)--(1/8,7/8);
\draw (0,0)--(7/8,1/8);
\draw (0,0)--(4/9,5/9);
\draw (0,0)--(5/9,4/9);
\draw (0,0)--(1/9,8/9);
\draw (0,0)--(8/9,1/9);
\draw[red] (0,0)--(2/3,1/3);
\draw (0,0)--(1/3,2/3);
\draw (0,0)--(1/15,14/15);
\draw (0,0)--(14/15,1/15);
\draw (0,0)--(7/15,8/15);
\draw (0,0)--(8/15,7/15);
\draw (0,0)--(1/20,19/20);
\draw (0,0)--(19/20,1/20); 
\draw (0,0)--(7/10,3/10);
\draw (0,0)--(3/10,7/10);
\draw (0,0)--(5/12,7/12);
\draw (0,0)--(7/12,5/12);
\draw (2/3, 1/3) node[right, red]{{$C$}};
\draw[->] (1.2, 0.2) -- (2.1, 0.2);
\draw[blue,line width=0.9mm] (2.5,1)--(3.5,0);
\draw (3,0.5) node[blue, above]{{$F^*$}};
\draw (2.5,1) node[green]{\tiny{$\bullet$}};
\draw (2.5,1) node[green, right]{{$E^*$}}; 
\filldraw[gray] (2.5,1) to [bend right = 50](3.5,0) to [bend left= 0](2.5,1) --cycle;
\draw[->,dashed] (2.5,-0.3)--(2.5,1.3);
\draw[->,dashed] (2.3,0)--(3.7,0);
\draw(2.5,1.3) node[left]{$y$};
\draw(3.7,0) node[below]{$x$};
\draw (47/18, 4/9) node[red]{\tiny{$\bullet$}};
\draw (47/18, 4/9) node[below, red]{{$C^*$}};
\draw (1.65, 0.5) node[red]{$C \mapsto C^*$}; 
\draw (3.5, 1) node{$K$};
\draw (-0.7,1) node{$\R^2$};
\draw[<-, thick] (3,0.1) to (3.6,0.3);
\draw(3.6,0.3) node[right]{$\sqrt{x} + \sqrt{y} = 1$};
 
\end{tikzpicture}
\end{center}
\caption{Legendre--Fenchel correspondence in the non-polyhedral case}\label{foll}
\end{figure}
\section{Canonical decomposition of a difference of convex sets}\label{section-decomposition}
Throughout this section, $N$ will denote a lattice of rank $n$ and $M = N^{\vee}$ its dual lattice. All convex sets in $M_{\R}$ are assumed to be compact, bounded and full-dimensional.

Let $K_1 \subseteq K_2$ be two convex sets in $M_{\R}$ with corresponding support functions $\phi_{K_1}, \phi_{K_2} \colon N_{\R} \to \R$. The aim of this section is to give a canonical decomposition of the difference $K_2 \setminus K_1$.
\begin{Def}\label{complement} We define two complete convex decompositions $\Sigma_{K_1}$ and $\Sigma_{K_2}$ of $N_{\R}$ by setting
\[
\Sigma_{K_i} \coloneqq \Upsilon(\phi_{K_i})
\]
for $i = 1,2$. 
\end{Def}
Note that the elements in $\Sigma_{K_i}$ for $i = 1,2$ are cones. This follows from the fact that the convex set $C_m$ corresponding to an $m \in \relint(K_i)$ is $\{0\}$. Hence, we will call $\Sigma_{K_i}$ a fan, eventhough it may not be finite nor rational. 

It follows from Remark \ref{delta1} that the Legendre--Fenchel duality gives an order-reversing, bijective correspondence between cones in $\Sigma_{K_i}$ and the set of exposed faces of $K_i$ for $i = 1,2$. For $F \leq K_i$ an exposed face, we will denote by $\sigma_F$ the cone in $\Sigma_{K_i}$ given by this correspondence.\\
The following is a key definition for giving the canonical decomposition of the difference $K_2 \setminus K_1$.   
\begin{Def} Let $F_1 \leq K_1$ and $F_2 \leq K_2$ be exposed faces. We say that $F_1$ \emph{is related to} $F_2$ (and denote it by $F_1 \sim F_2$) if and only if
\[
\relint\left(\sigma_{F_1}\right) \cap \relint\left(\sigma_{F_2}\right) \neq \emptyset
\]
is satisfied.
\end{Def} 
\begin{Def}
Let $\Sigma \subseteq N_{\R}$ be a complete (not necessarily finite nor rational) conical subdivision of $N_{\R}$. We say that $\Sigma$ is a \emph{difference conical subdivision} for $K_1$ and $K_2$, and denote it by $\Sigma = \Sigma_{K_2 \setminus K_1}$, if the following two conditions are satisfied: 
\begin{enumerate}
\item\label{condition1} $\Sigma$ is a smooth refinement of both $\Sigma_{K_1}$ and $\Sigma_{K_2}$.
\item\label{condition2} Let $F_1 \leq K_1$ and $F_2 \leq K_2$ be exposed faces. If $F_1 \sim F_2$, then there exists a $\tau \in \Sigma(1)$ such that $\tau \in \text{relint}(\sigma_{F_1}) \cap \text{relint}(\sigma_{F_2})$.
\end{enumerate}
\end{Def}
\begin{rem}
Note that given two $n$-dimensional convex sets $K_1 \subseteq K_2$, we can always find a difference conical subdivision $\Sigma_{K_1\setminus K_2}$.
\end{rem} 
The next theorem gives us the canonical decomposition of the difference $K_2 \setminus K_1$. It is the main result of this section. 
\begin{theorem}\label{convexdecom}
Let $K_1 \subseteq K_2$ be two $n$-dimensional convex sets in $M_{\R}$. Then we have that
\[
\Upsilon\left(K_2 \setminus K_1\right) \coloneqq \left\{\convhull\left(F_{1}, F_{2}\right) \, \big{|} \, F_{1} \overset{\exposed}{\leq} K_1, \, F_{2} \overset{\exposed}{\leq} K_2 \, \text{ and }\, F_{1} \sim F_{2} \right\}
\]
is a convex decomposition of the difference $K_2 \setminus K_1$. 
\end{theorem}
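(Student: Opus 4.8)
The plan is to verify the two conditions defining a convex decomposition: that the listed pieces cover the (closed) difference $K_2\setminus\interior(K_1)$, and that any two of them are either disjoint or meet in a common face. For a direction $u\in N_{\R}\setminus\{0\}$ write $F_i(u)\coloneqq\partial\phi_{K_i}(u)$ for the exposed face of $K_i$ on which $\langle\cdot,u\rangle$ attains its minimum $\phi_{K_i}(u)$. By Remark~\ref{delta1} one has $u\in\relint(\sigma_{F_i(u)})$, so $F_1(u)\sim F_2(u)$ automatically; thus every piece produced from a single direction is a legitimate member of the collection, and conversely every related pair arises from some such $u$.

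The device I would use is the lifted (Cayley-type) body $Q\coloneqq\conv\bigl((K_1\times\{0\})\cup(K_2\times\{1\})\bigr)\subseteq M_{\R}\times\R$, together with the coordinate projection $\pi\colon M_{\R}\times\R\to M_{\R}$. Its horizontal slices are the Minkowski interpolations $Q\cap(M_{\R}\times\{t\})=(1-t)K_1+tK_2$, which increase from $K_1$ at $t=0$ to $K_2$ at $t=1$, so $\pi(Q)=K_2$. First I would observe that the lower boundary of $Q$ is the graph of the convex function assigning to each base point its minimal height, whence $\pi$ maps it bijectively onto $K_2$ and restricts to an injective affine map on every lower face. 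Then I would identify the lower faces explicitly: for $u\ne0$ the face of $Q$ supported by the functional $(u,\phi_{K_1}(u)-\phi_{K_2}(u))$ is exactly $\conv\bigl(F_1(u)\times\{0\},F_2(u)\times\{1\}\bigr)$, since with this height-slope the two summands tie for the minimum; its image under $\pi$ is precisely the piece $\convhull(F_1(u),F_2(u))$. The only remaining lower face is the bottom $K_1\times\{0\}$, which projects onto $K_1$.

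Covering then follows formally: the lower faces tile the lower boundary, and since $\pi$ is a bijection there, their images tile $\pi(Q)=K_2$; discarding the bottom face (image $K_1$) leaves the lateral lower faces, whose images tile $K_2\setminus\interior(K_1)$. That each piece avoids $\interior(K_1)$ is immediate from its defining direction, as every point $q$ of $\convhull(F_1(u),F_2(u))$ satisfies $\langle q,u\rangle\le\phi_{K_1}(u)$ while interior points of $K_1$ satisfy the strict inequality for $u\ne0$. For the common-face property I would invoke that the exposed faces of the convex body $Q$ form a convex decomposition (the Example of Section~\ref{section-duality}), so two lateral lower faces $G,G'$ are either disjoint or meet in a common face $G\cap G'$; applying the injective affine map $\pi$ and using $\pi(G\cap G')=\pi(G)\cap\pi(G')$ (valid precisely because $\pi$ is injective on the lower boundary), the pieces $\pi(G)$ and $\pi(G')$ are correspondingly disjoint or meet in the common face $\pi(G\cap G')$.

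The step I expect to be the real obstacle is the common-face property, and the point of the lifting is that it converts this into the already-known facial structure of a single convex body: relatedness of $F_1(u)$ and $F_2(u)$ means their normal cones share an interior direction, which is exactly what places the corresponding face of $Q$ on the lower boundary, where $\pi$ collapses no dimensions and hence transports faces and intersections faithfully. Two bookkeeping points remain. The degenerate pair $(K_1,K_2)$, whose only common direction is $u=0$, corresponds to the zero functional and defines no proper lower face of $Q$; it must be excluded, which is the content of condition~(\ref{condition2}) in the definition of a difference conical subdivision, namely the existence of an honest ray $\tau\in\Sigma(1)$ inside $\relint(\sigma_{F_1})\cap\relint(\sigma_{F_2})$. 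Finally, I would remark that the same combinatorics is available intrinsically, without lifting, by identifying the related pairs with the cones of the common refinement $\Sigma_{K_1}\wedge\Sigma_{K_2}=\Upsilon(\phi_{K_1}+\phi_{K_2})$, the normal fan of $K_1+K_2$, and transporting the Legendre--Fenchel duality of Theorem~\ref{leg_fenc_duality} through it; the lifted body merely packages this duality in the most transparent form.
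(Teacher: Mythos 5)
Your lifting to the Cayley body $Q=\conv\bigl((K_1\times\{0\})\cup(K_2\times\{1\})\bigr)$ is a genuinely different route from the paper, which argues directly: containment of each piece via the support-function inequality $\langle v,m_1\rangle\geq\langle v,m_2\rangle$, covering by picking the smallest exposed face of $K_2$ through a given point and a ray in the relative interior of its normal cone, and the common-face property via a hyperplane chosen to separate both pairs $F_1,F_1'$ and $F_2,F_2'$ simultaneously. For the pieces coming from directions $u$ with $\phi_{K_1}(u)>\phi_{K_2}(u)$ your mechanism is sound and in fact cleaner at the hard step: the face of $Q$ exposed by $(u,\phi_{K_1}(u)-\phi_{K_2}(u))$ is indeed $\conv\bigl(F_1(u)\times\{0\}\cup F_2(u)\times\{1\}\bigr)$, it lies on the lower boundary, and transporting the facial structure of $Q$ through the there-injective projection $\pi$ gives the common-face property without the paper's rather delicate simultaneous-separation claim.

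The gap is the degenerate case $\phi_{K_1}(u)=\phi_{K_2}(u)$, equivalently $F_1(u)\subseteq F_2(u)$, where the piece is $\convhull(F_1(u),F_2(u))=F_2(u)$. Here the height-slope is $c=0$, the corresponding face of $Q$ is exposed by the \emph{horizontal} functional $(u,0)$ and contains the vertical segments $\{m\}\times[0,1]$ for $m\in F_1(u)$; it is not a lower face, $\pi$ is not injective on it, and the identity $\pi(G)\cap\pi(G')=\pi(G\cap G')$ on which your common-face argument rests is unavailable. This is not a removable technicality: for $K_1=[0,1]^2\subseteq K_2=[0,2]\times[0,1]$ the bottom edges are related and produce the piece $[0,2]\times\{0\}$, while the right edges produce $[1,2]\times[0,1]$; these intersect in $[1,2]\times\{0\}$, which is a face of the second piece but not of the first (the faces of a segment are its endpoints and itself). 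So the common-face property actually fails for such degenerate pieces, and no argument can close the gap without restricting the collection (e.g.\ to pairs with $F_1\not\subseteq F_2$, which is exactly the condition that the lifted face be a lower face of $Q$). To be fair, the paper's own proof breaks silently at the same configuration --- its concluding claim that $\convhull(F_1\cap F_1',F_2\cap F_2')$ is a face of both pieces is false in the example above --- so your argument is not worse, but as written it only proves the statement for the sub-collection of pieces arising from honest lower faces. Two smaller points: the improper pair $F_1=K_1$, $F_2=K_2$ (exposed by $v=0$) must indeed be discarded, but this has nothing to do with condition~(\ref{condition2}) on difference conical subdivisions, which plays no role in this theorem; and your covering step needs an extra word for points of $\partial K_2\setminus K_1$, where the subgradient of the height function that you use to produce a supporting functional with positive last coordinate need not exist.
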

\begin{proof}
Let $F_1, F_1' \leq K_1$ and $F_2, F_2' \leq K_2$ be exposed faces such that $F_1 \sim F_1'$ and $F_2 \sim F_2'$. Now, since $F_1 \sim F_2$ we may fix a $v \in N_{\R}$ such that 
\[
F_1 = \left\{m \in K_1 \, \big{|} \, \langle v,m\rangle = \min_{m' \in K_1}\langle v,m'\rangle \right\}\quad \text{ and } \quad F_2 = \left\{m \in K_2 \, \big{|} \, \langle v,m \rangle = \min_{m' \in K_2}\langle v,m'\rangle \right\}.
\]
Analogously, we may fix $v' \in N_{\R}$ defining the faces $F_1' \sim F_2'$. Note that related faces live in parallel hyperplanes. 

Now, let us show that $\convhull\left(F_1, F_2\right) \subseteq K_2\setminus K_1$.  
Let $m \in \convhull(F_1, F_2)$. The fact that $m \in K_2$ is clear. Now, let $\lambda_1, \lambda_2$ be non-negative real numbers satisfying $\lambda_1 + \lambda_2 = 1$ and such that 
\[
m = \lambda_1m_1 + \lambda_2m_2
\]
for $m_1 \in F_1$ and $m_2 \in F_2$. 
Since $m_1 \in F_1$, $m_2 \in F_2$ and $K_1 \subseteq K_2$, we have
\[
\langle v, m_1 \rangle = \min_{m' \in K_1} \langle v,m' \rangle \geq \min_{m' \in K_2} \langle v, m' \rangle = \langle v,m_2 \rangle.
\]
Hence, we obtain
\[
\langle v,m \rangle = \lambda_1 \langle v,m_1 \rangle + \lambda_2 \langle v,m_2 \rangle \leq \lambda_1\langle v,m_1 \rangle + \lambda_2 \langle v, m_1 \rangle = \langle v, m_1 \rangle,
\]
which implies that 
\begin{align}\label{fff11}
\convhull(F_1, F_2) \cap K_1 = F_1, 
\end{align}
in particular $\convhull\left(F_1, F_2\right) \subseteq K_2\setminus K_1$.

Now, let $m' \in K_2 \setminus K_1$. Let $F_2 \leq K_2$ be the smallest exposed face such that $m' \in F_2$. Consider a $\tau \in \relint\left(\sigma_{F_2}\right)$ and let $F_1 \leq K_1$ be the unique exposed face such that $\tau \in \relint\left(\sigma_{F_1}\right).$ Then $m' \in \convhull\left(F_1, F_2\right)$. 

Hence, we have shown that
\[
K_2 \setminus K_1 = \left\{\convhull\left(F_{1}, F_{2}\right) \, \bigg{|} \, F_{1} \overset{\text{\tiny{exposed}}}{\leq} K_1, \, F_{2} \overset{\text{\tiny{exposed}}}{\leq} K_2 \, \text{ and }\, F_{1} \sim F_{2} \right\}.
\] 
It remains to show that this is indeed a convex decomposition, i.e.~that 
\begin{align}\label{intersectionplane}
\convhull\left(F_1, F_2\right) \cap \convhull\left(F_1', F_2'\right) 
\end{align}
is either empty or a face of both. If the intersection is empty, then we are done. Hence, assume that $\convhull\left(F_1, F_2\right) \cap \convhull\left(F_1', F_2'\right) \neq \emptyset$.
By (\ref{fff11}), the case in which we have that $\convhull(F_1, F_2) \subseteq \convhull(F_1', F_2')$ or in which $\convhull(F_1', F_2') \subseteq \convhull(F_1, F_2)$ is also clear. Hence, assume that 
\[
 \convhull\left(F_1, F_2\right) \setminus \convhull\left(F_1', F_2'\right) \neq \emptyset
\]
and
\[
\convhull\left(F_1', F_2'\right) \setminus \convhull\left(F_1, F_2\right) \neq \emptyset.
\]
Let $H$ be a hyperplane separating $F_1$ and $F_1'$, which exists by the definition of an exposed face. Then, since $F_1$ is parallel to $F_2$ and $F_1'$ is parallel to $F_2'$, we can choose $H$ to be a separating hyperplane of $F_2$ and $F_2'$ as well. 
  
The existence of this separating hyperplane implies that 
\[
\convhull\left(F_1, F_2\right) \cap \convhull\left(F_1', F_2'\right) = \convhull\left(F_1 \cap F_1', F_2 \cap F_2'\right)
\]
which proves that the intersection in (\ref{intersectionplane}) is a face of both. This concludes the proof of the proposition. 
\end{proof}
\begin{rem}
As was mentioned in the introduction, in the polyhedral case, the above canonical decomposition a polyhedral subdivision of the complement of two polytopes, one contained in the other. This subdivision appears in the literature (e.g. in \cite{GP}) although it is constructed using the so called pushing method. We haven't found in the literature the method we used in Theorem \ref{convexdecom} nor have we found such a canonical decomposition in the non-polyhedral case. 
\end{rem}
Let $K_1 \subseteq K_2$ be full-dimensional convex sets in $M_{\R}$. 
\begin{Def}\label{correctionterm}
Let $F \leq K_2$ be an exposed face. To $F$ we associate the \emph{correction set} 
\[
K_F \coloneqq \bigcup_{\tau \in \relint\left(\sigma_F\right)}\convhull\left(F, F_{1,\tau}\right),
\]
where for $\tau \in \relint\left(\sigma_F\right)$, the face $F_{1,\tau}$ is the unique exposed face of $K_1$ such that $\tau \in \relint\left(\sigma_{F_{1,\tau}}\right)$. The associated \emph{correction term} $c_F$ is defined as $n!$ times the volume of $K_F$, i.e.~
\[
c_F \coloneqq n!\, \vol\left(K_F\right).
\]
\end{Def}
\begin{rem} Note that by Theorem \ref{convexdecom} we have that
\[
K_2\setminus K_1 = \bigcup_{F\overset{\text{\tiny{exposed}}}{\leq} K_2} K_F
\]
is a convex decomposition and hence 
\[
n! \, \vol \left(K_2 \setminus K_1 \right) = \sum_{F\overset{\text{\tiny{exposed}}}{\leq} K_2} c_F.
\] 
\end{rem}
Let's look at a simple $2$-dimensional polyhedral example. 
\begin{exa}
Consider the simplex $K_1$ contained in the square $K_2$ as in the Figure \ref{simplexinsquare}. Here, the different colors show the correction sets associated to the faces of $K_2$. Figure \ref{fanssimplexinsquare} shows the dual picture with the fans $\Sigma_{K_2}, \, \Sigma_{K_2 \setminus K_1}, \, \Sigma_{K_1}$. 
\begin{figure}[ht!]
\begin{center}
\begin{tikzpicture}[scale=1.5]
\draw[line width=0.9mm](-2,-2)--(2,-2)--(2,2)--(-2,2)--cycle;
\draw[green!100,line width=0.9mm](2,-2)--(2,2);
\draw[line width=0.9mm](0,0)--(2,0)--(0,2)--cycle; 
\filldraw[yellow!30] (-2,-2)--(0,0)--(2,0)--(2,-2)--cycle;
\filldraw[red!30] (-2,-2)--(0,0)--(0,2)--(-2,2)--cycle;
\filldraw[blue!30] (2,2)--(0,2)--(2,0)--cycle;
\draw[yellow!90,line width=0.9mm] (-2,-2)--(2,-2);
\draw[red!90, line width=0.9mm](-2,-2)--(-2,2);
\draw (2,2) node[blue!90]{\Large{$\bullet$}};
\draw (0,0) node[above right]{$K_1$}; 
\draw (-2,2) node[below right]{$K_2$};
\draw (0,-2) node[below]{$F_2$};
\draw (2,-1) node[right]{$F_2'$};
\draw (2,2) node[right]{$P_2$};
\draw (1,0) node[below, brown!100]{$F_{1,\tau_0}$}; 
\draw (2,0) node[right, brown!90]{$F_{1, \tau_1}=F_{1,\tau_2}$};
\draw (0,2) node[above, brown!90]{$F_{1,\tau_4}$};
\draw[orange!90, line width=0.5mm] (0,0)--(2,0);
\draw[purple!90, line width=0.5mm] (0,2)--(2,0);
\draw (0.8,0.75) node[above, brown!90]{$F_{1,\tau_3}$};
\draw (2,0) node[brown!90]{$\bullet$};
\draw (0,2) node[brown!20]{$\bullet$};
\end{tikzpicture}
\end{center}
\caption{Canonical decomposition of the complement of the simplex contained in the square}\label{simplexinsquare}
\end{figure}
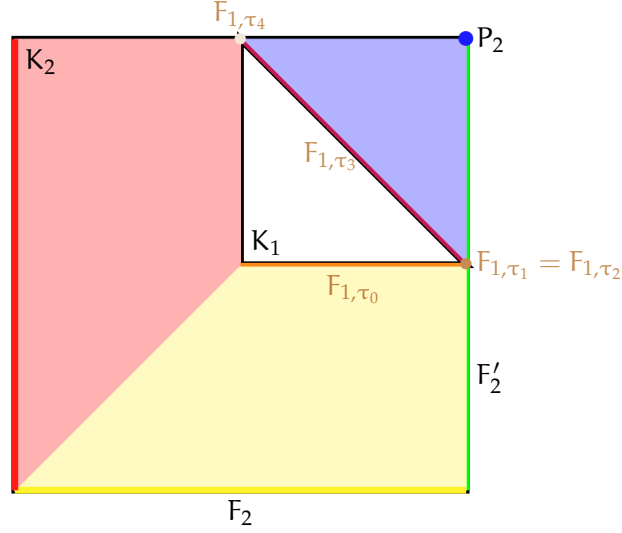

\begin{figure}[ht!]
\begin{center}
\begin{tikzpicture}[scale=1.5]
\draw[red!90, line width=0.9mm](-3,-0)--(-2,0);
\draw (-4,0)--(-2,0);
\draw (-3,-1)--(-3,1);
\draw (-4,1) node{$\Sigma_{K_2}$};
\draw (-3,0.8) node[right]{$\sigma_{F_2}$};
\draw[yellow!100,line width=0.9mm](-3,0)--(-3,1);
\draw (-3.8,0) node[above]{$\sigma_{F_2'}$};
\draw[green!100,line width=0.9mm] (-3,0)--(-4,0);
\filldraw[blue!30] (-4,0)--(-3,-1)--(-3,0)--cycle;
\draw (-3.5,-0.5) node{$P_2$};
\draw (0,0)--(2,0);
\draw (1,-1)--(1,1);
\draw (0,-1)--(1,0);
\draw (0,1) node{$\Sigma_{K_2 \setminus K_1}$};
\draw[densely dotted,magenta] (1,0)--(0,-1);
\draw (0,-1) node[above, magenta]{$\tau_3$};
\draw[densely dotted,magenta] (1,0)--(0.5,-1);
\draw (0.5,-1) node[above, magenta]{$\tau_4$};
\draw[densely dotted,magenta] (1,0)--(0,-0.5);
\draw (0,-0.5) node[above, magenta]{$\tau_2$};
\draw[densely dotted,magenta] (1,0)--(0,0);
\draw (0,0) node[above right, magenta]{$\tau_1$};
\draw[densely dotted,magenta] (1,0)--(1,1);
\draw (1,1) node[below right, magenta]{$\tau_0$};

\filldraw[brown!10] (3,-1)--(3,1)--(4,1)--(4,0)--cycle;
\filldraw[brown!30] (3,-1)--(5,-1)--(5,0)--(4,0)--cycle; 
\draw[dashed] (3,0)--(4,0);
\draw (4,0)--(5,0);
\draw (4,0)--(4,1);
\draw[dashed](4,-1)--(4,0);
\draw (3,-1)--(4,0);
\draw (3,1) node{$\Sigma_{K_1}$};
\draw (4.5, -0.5) node{$\sigma_{F_{1,\tau_4}}$};
\draw[orange!90, line width=0.9mm] (4,0)--(4,1);
\draw (4,1) node[below right]{$\sigma_{F_{1,\tau_0}}$};
\draw[purple!90, line width=0.9mm] (4,0)--(3,-1);
\draw (3,-0.85) node[above]{$\sigma_{F_{1,\tau_3}}$};
\draw (3.4,0) node[above]{$\sigma_{F_{1,\tau_1}} = \sigma_{F_{1,\tau_2}}$};
\end{tikzpicture}
\end{center}
\caption{Difference conical subdivision of the simplex contained in the square}\label{fanssimplexinsquare}
\end{figure}
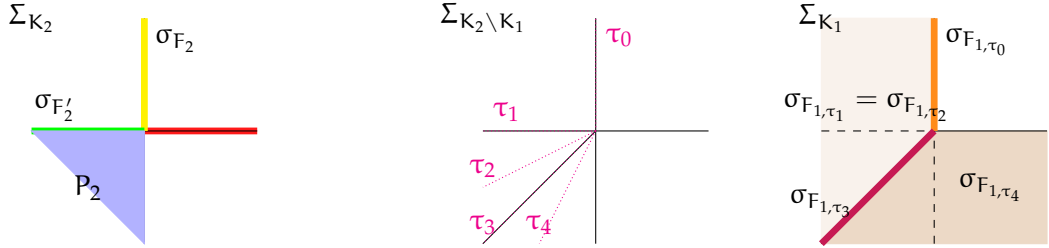
\end{exa}

\section{Toric $b$-divisors and surface area measures}\label{section-measures}

Throughout this section, $N$ will denote a lattice of rank $n$ and $M = N^{\vee}$ its dual lattice. All convex sets in $M_{\R}$ are assumed to be compact, bounded and full-dimensional. 

The goal of this section is to recall the main definitions and facts regarding toric $b$-divisors (see \cite{botero}) and to relate the intersection theory of toric $b$-divisors with the so called surface area measure (and a mixed version thereof) associated to a convex set (and to a collection of convex sets) (see \cite{BM}). 

We fix a complete, smooth fan $\Sigma \subseteq N_{\mathbb{R}} = N \otimes_{\Z} \R$ and we denote by $X_{\Sigma}$ the corresponding $n$-dimensional, complete, smooth toric variety with dense open torus $\T$. We refer to \cite{CLS} and to \cite{ful} for a more detailed introduction to toric geometry. The set $R(\Sigma)$ consists of all smooth sudivisions of $\Sigma$. This is a directed set with partial order given by $\Sigma'' \geq \Sigma'$ in $R(\Sigma)$ if and only if $\Sigma''$ is a smooth subdivision of $\Sigma'$. The \emph{toric Riemann--Zariski space} of $X_{\Sigma}$ is defined as the inverse limit 
\[
\mathfrak{X}_{\Sigma} \coloneqq \varprojlim_{\Sigma' \in R(\Sigma)}X_{\Sigma'},
\] 
with maps given by the toric proper birational morphisms $\pi_{\Sigma''}\colon X_{\Sigma''} \to X_{\Sigma'}$ induced whenever $\Sigma'' \geq \Sigma'$. The group of \emph{toric Weil $b$-divisors} on $\X$ consists of elements in the inverse limit
\[
\text{We}(\mathfrak{X}_{\Sigma})_{\mathbb{Q}} \coloneqq \varprojlim_{\Sigma' \in R(\Sigma)} \mathbb{T}\text{-}\Ca(X_{\Sigma'})_{\mathbb{Q}},
\]
where $ \mathbb{T}\text{-}\Ca(X_{\Sigma'})_{\mathbb{Q}}$ denotes the set of toric Cartier $\Q$-divisors of $X_{\Sigma'}$, with maps given by the push-forward map of toric Cartier $\Q$-divisors. We will denote $b$-divisors with bold $\D$ to distinguish them from classical divisors $D$. We can think of a toric $b$-divisor as a net of toric Cartier $\Q$-divisors $\left(D_{\Sigma'}\right)_{\Sigma' \in R(\Sigma)}$, being compatible under push-forward. 

A toric $b$-divisor $\D = \left(D_{\Sigma'}\right)_{\Sigma' \in R(\Sigma)}$ is said to be \emph{nef}, if $D_{\Sigma'} \in \mathbb{T}\text{-}\Ca(X_{\Sigma'})_{\mathbb{Q}}$ is nef for all $\Sigma'$ in a cofinal subset of $R(\Sigma)$. It follows from basic toric geometry that there is a bijective correspondence between the set of nef toric $b$-divisors and the set of $\mathbb{Q}$-valued, conical, $\Q$-concave functions on $N_{\Q}$. 

The \emph{mixed degree} $\D_1 \dotsm \D_n$ of a collection of toric $b$-divisors is defined as the limit (in the sense of nets)
\[
      \D_1\dotsm \D_n \coloneqq \lim_{\Sigma' \in R(\Sigma)} D_{1_{\Sigma'}} \dotsm D_{n_{\Sigma'}}
      \]
of top intersection numbers of toric divisors, provided this limit exists and is finite. In particular, if $\D = \D_1 = \dotsc = \D_n$, then the limit (in the sense of nets) 
\[
\D^n \coloneqq \lim_{\Sigma' \in R(\Sigma)} D_{\Sigma'}^n, 
\]
provided this limit exists and is finite, is called the \emph{degree} of the toric $b$-divisor $\D$. A toric $b$-divisor whose degree exists, is said to be \emph{integrable}. 

Let $\D_1, \dotsc,\D_n$ be a collection of toric $b$-divisors on a smooth and complete toric variety $X_{\Sigma}$ of dimension $n$ which are nef. Let $\tilde{\phi}_i \colon N_{\mathbb{Q}} \to \mathbb{Q}$ be the corresponding $\mathbb{Q}$-concave functions for $i = 1, \dotsc, n$. The \emph{mixed volume}  $\MV\left(K_1, \dotsc,K_n\right)$ of a collection of convex sets $K_1, \dotsc, K_n$ is defined by 
\begin{align}\label{def:mixedvolume}
\MV\left(K_1, \dotsc,K_n\right) \coloneqq \sum_{j=1}^n(-1)^{n-j}\sum_{1\leq i_1<\dotsb <i_j\leq n}\vol\left(K_{i_1}+ \dotsb + K_{i_j}\right), 
\end{align}
where the \enquote{$+$} refers to \emph{Minkowski addition} of convex sets.
The following theorem relates the mixed degree of nef toric $b$-divisors with the mixed volume of convex bodies. It is a combination of \cite[Theorems 4.9 and 4.12]{botero}.
    
\begin{theorem}\label{the:volume}

      With notations as above, the functions $\tilde{\phi}_i$ extend to continuous, concave functions ${\phi}_i \colon N_{\mathbb{R}} \to \mathbb{R}$.
      Moreover, their mixed degree $\D_1\dotsm \D_n$ exists, and is given by the mixed volume of the stability sets $K_{{\phi}_i}$ of the concave functions ${\phi}_i$, i.e. we have that
      \[
      \D_1\dotsm \D_n = \MV\left(K_{{\phi}_1}, \dotsc, K_{{\phi}_n}\right).
      \]
      In particular, a nef toric $b$-divisor $\D$ is integrable, and its degree is given by 
      \[
      \D^n = n!\,\vol\left(K_{{\phi}}\right),
      \]
 where ${\phi}$ is the corresponding concave function. 
 
 \end{theorem}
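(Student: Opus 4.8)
The plan is to reduce the computation of the mixed degree to classical toric intersection theory at each finite level of the toric Riemann–Zariski space, and then to pass to the limit using the continuity of the mixed volume with respect to Hausdorff convergence of convex bodies.

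First I would treat the extension of the $\tilde{\phi}_i$. Each nef toric $b$-divisor $\D_i = (D_{i,\Sigma'})_{\Sigma'}$ is represented, on a cofinal family of smooth fans $\Sigma'$, by a nef toric Cartier divisor $D_{i,\Sigma'}$ whose support function $\phi_{i,\Sigma'}$ is concave and piecewise linear with respect to $\Sigma'$, with associated lattice polytope $P_{i,\Sigma'} = K_{\phi_{i,\Sigma'}}$. By compatibility under push-forward the $\phi_{i,\Sigma'}$ are the $\Sigma'$-piecewise-linear interpolations of $\tilde{\phi}_i$ along the rays of $\Sigma'$; since a concave function dominates any such interpolation and the interpolations increase under refinement, the family is monotone, $\phi_{i,\Sigma'} \nearrow \tilde{\phi}_i$ on $N_{\Q}$. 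Dually the stability sets are nested decreasingly, $P_{i,\Sigma'} \supseteq P_{i,\Sigma''}$ for $\Sigma'' \geq \Sigma'$, so $K_{\phi_i} \coloneqq \bigcap_{\Sigma'} P_{i,\Sigma'}$ is a bounded convex set contained in every $P_{i,\Sigma'}$. Boundedness of $K_{\phi_i}$ forces $\phi_i(v) \coloneqq \inf_{m \in K_{\phi_i}}\langle m, v\rangle$ to be finite for all $v \in N_{\R}$; a finite concave function on a finite-dimensional space is automatically continuous, and $\phi_i$ is conical and agrees with $\tilde{\phi}_i$ on $N_{\Q}$ by construction. This yields the first assertion.

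Next I would invoke the classical toric dictionary at each finite level: for nef toric Cartier divisors $D_{1,\Sigma'}, \dotsc, D_{n,\Sigma'}$ on the complete smooth toric variety $X_{\Sigma'}$, the top mixed intersection number equals the mixed volume of the associated polytopes,
\[
D_{1,\Sigma'} \dotsm D_{n,\Sigma'} = \MV\left(P_{1,\Sigma'}, \dotsc, P_{n,\Sigma'}\right),
\]
which is the standard toric computation (see \cite{CLS, ful}); the mixed degree of the $b$-divisors is by definition the net limit of the left-hand side. To pass to the limit I would show $P_{i,\Sigma'} \to K_{\phi_i}$ in the Hausdorff metric. This follows from the monotone nesting $P_{i,\Sigma'} \searrow K_{\phi_i}$: the concave support functions $\phi_{i,\Sigma'}$ converge pointwise and monotonically to $\phi_i$, and since all of them are finite the convergence is uniform on the unit sphere, which is precisely Hausdorff convergence of the dual bodies. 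Continuity of the mixed volume under Hausdorff convergence (see \cite{BM}) then gives
\[
\lim_{\Sigma'}\MV\left(P_{1,\Sigma'}, \dotsc, P_{n,\Sigma'}\right) = \MV\left(K_{\phi_1}, \dotsc, K_{\phi_n}\right),
\]
so the net limit exists, is finite, and equals the claimed mixed volume. Specializing to $\D_1 = \dotsb = \D_n = \D$ and using the elementary identity $\MV(K, \dotsc, K) = n!\,\vol(K)$, which follows directly from the definition \eqref{def:mixedvolume}, produces the degree formula $\D^n = n!\,\vol(K_{\phi})$.

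The main obstacle is the convergence step: one must verify that the polytopes genuinely converge in the Hausdorff metric and, crucially, that the resulting net limit is independent of the chosen cofinal family, so that the mixed degree is well-defined. The technical heart is upgrading the monotone pointwise convergence of the concave support functions to the uniform convergence required to apply continuity of the mixed volume. The boundedness of the limiting stability set $K_{\phi_i}$, which ultimately rests on the nef hypothesis, is exactly what keeps all quantities finite and makes this upgrade possible.
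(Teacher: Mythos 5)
The paper does not actually prove this theorem: it is imported as a combination of Theorems 4.9 and 4.12 of \cite{botero}, so there is no in-text argument to compare against. Your reconstruction follows the same strategy as the cited source and is essentially sound: identify $D_{i,\Sigma'}$ with the $\Sigma'$-piecewise-linear interpolation of $\tilde{\phi}_i$, note that these increase under refinement (superadditivity of concave conical functions on the subdivided cones) so that the polytopes $P_{i,\Sigma'}$ decrease to $K_{\phi_i}$, upgrade monotone pointwise convergence of the support functions to uniform convergence on $\S^{n-1}$ (Dini's theorem, valid for nets), and combine continuity of the mixed volume in the Hausdorff metric with the level-wise toric identity $D_{1,\Sigma'}\dotsm D_{n,\Sigma'}=\MV\left(P_{1,\Sigma'},\dotsc,P_{n,\Sigma'}\right)$. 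The one point you should tighten is the cofinality issue you yourself flag: nefness is only assumed on a cofinal subset of $R(\Sigma)$, whereas the mixed degree is defined as a limit over the full net, and convergence of a cofinal subnet does not by itself force convergence of the whole net; you must either control the intersection numbers at the non-nef levels or restrict the construction to the cofinal nef subset, which is how the cited reference effectively proceeds. (Also, nonemptiness of $K_{\phi_i}=\bigcap_{\Sigma'}P_{i,\Sigma'}$, needed for finiteness of $\phi_i$, should be noted explicitly; it follows from the finite intersection property of the nested compact sets.)
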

Now, we relate the intersection theory of toric $b$-divisors with the so called surface area measure associated to a convex set (Definition \ref{surfarea}). We start with some definitions.
\begin{Def} Consider the vector space $\R^n$ equipped with the standard euclidean metric. For  $0 \leq k \leq n$, we let $\mathcal{H}^k$ be the $k$-dimensional \emph{Hausdorff measure} on $\R^n$. In particular, if $\omega$ is a Borel subset of a $k$-dimensional euclidean space $E^k$ or a $k$-dimensional sphere $\S^k$  in $\R^n$, then $\mathcal{H}^k(\omega)$ coincides with the $k$-dimensional \emph{Lebesgue} measure of $\omega$ computed in $E^k$ or with the $k$-dimensional \emph{spherical Lebesgue} measure of $\omega$ computed in $\S^k$, respectively. 
\end{Def}

Let $K \subseteq \R^n$ be a compact, bounded, full-dimensional convex set with corresponding support function $\phi_K \colon \R^n \to \R$. Moreover, let 
\begin{align}\label{gaussmap}
g_K \colon \S^{n-1} \longrightarrow \mathcal{P}\left(\partial K\right),
\end{align}
where $\mathcal{P}\left(\partial K\right)$ denotes the power set of the boundary $\partial K$ of $K$, be the map given in the following way. In the case that $\phi_K$ is of class $C^2$, $g_K$ sends $u \in \S^{n-1}$ to the gradient $\nabla \phi_K(u)$. In general, the inverse $g^{-1}$ is what in the literature is called the \emph{Gauss} map, which assigns the outer unit normal vector $v_K(x)$ to an $x \in \partial_*K$, where $\partial_*K$ consists of all points in the boundary $\partial K$ of $K$ having a unique outer normal vector. In other words, we have that 
\[
g_K(u) = \left\{ m \in \R^n \, \big{|} \, \langle m,u\rangle = \phi_K(u) \text{ and } \langle m, v \rangle \geq \phi_K(v), \, \forall v \in \R^n\right \},
\]
for every $u \in \S^{n-1}$. This is also the map given by the Legendre--Fenchel duality (see Section \ref{section-duality}).
\begin{Def}\label{surfarea}
The \emph{surface area measure} $S_{n-1}(K, \cdot)$ associated to $K$ is the finite Borel measure on the unit sphere $\S^{n-1}$ defined by
\[
S_{n-1}(K,\omega) = \mathcal{H}^{n-1}\left(g_K(\omega)\right)
\]
for every Borel subset $\omega$ of $\S^{n-1}$. \\
In particular, for a polytope $P$ with unitary normal vectors $u_1, \dotsc, u_r$ at its facets $F_1, \dotsc, F_r$, respectively, the surface area measure of a Borel subset $\omega \subseteq \S^{n-1}$ is given by
\[
S_{n-1}(P, \omega) = \sum_{u_i \in \omega} \vol_{n-1}\left(F_i\right),
\] 
where $ \vol_{k}$ denotes the $k$-dimensional volume operator. In other words, we have that 
\[
S_{n-1}(P, \cdot) = \sum_{i=1}^r\vol_{n-1}\left(F_i\right) \delta_{u_i},
\]
where $\delta_{u_i}$ denotes the Dirac delta measure supported on $u_i \subset \S^{n-1}$ for all $i = 1, \dotsc, r$.
\end{Def}
\begin{exa}\label{enya}
Let $P$ be a polytope and let $\phi_P$ be its corresponding piecewise linear, concave support function. We get the formula 
\[
n\, \vol_n(P) = \sum_{i=1}^r\phi_P(u_i)\vol_{n-1}(F_i) = \int_{\S^{n-1}}\phi_P(u) S_{n-1}(P, u)
\]
for the volume of $P$. This formula can be generalized to any full dimensional bounded, compact, convex set in the following way: for every $n$-dimensional bounded, compact, convex set $K$, the volume of $K$ is given by 
\begin{align}\label{volumeformula1}
\vol_n(K) = \frac{1}{n}\int_{\S^{n-1}}\phi_K(u)S_{n-1}\left(K, u\right).
\end{align}
\end{exa}
As is described in \cite[Section 5]{BM}, one can generalize the surface area measure associated to a single convex to a collection of $n-1$ (not necessarily distinct) convex sets. This is the so called \emph{mixed surface area measure}. We denote by $\mathcal{K}_n$ the set of compact, bounded, full-dimensional convex sets in $\R^n$. The following is \cite[Theorem 5.1.7]{BM}.
\begin{theorem/Def}\label{mixedvolume2} There is a nonnegative symmetric function $\V \colon \left(\mathcal{K}_n\right)^n \to \R$, called the \emph{mixed volume}, such that for every natural number $\ell$ and for every non-negative real numbers $\lambda_1, \dotsc, \lambda_{\ell}$, the equation 
\[
\vol_n(\lambda_1 K_1 + \dotsb + \lambda_{\ell} K_{\ell}) = \sum_{i_1, \dotsc , i_n = 1}^{\ell} \lambda_{i_1}\dotsm \lambda_{i_n}\V \left(K_{i_1}, \dotsc , K_{i_n}\right),
\]
where the sum on the left hand side is the Minkowski sum of convex sets, is satisfied for any collection of convex bodies $K_1, \dotsc, K_{\ell} \in \mathcal{K}_n$. 

Furthermore, there is a symmetric map $S$ from $\left(\mathcal{K}_n\right)^{n-1}$ into the space of finite Borel measures on $\S^{n-1}$, called the \emph{mixed surface area measure}, such that for every natural number $\ell$ and for every non-negative real numbers $\lambda_1, \dotsc, \lambda_{\ell}$, the equation 
\[
S_{n-1}\left(\lambda_1 K_1 + \dotsb + \lambda_{\ell} K_{\ell}, \omega \right) = \sum_{i_1, \dotsc , i_{n-1} = 1}^{\ell} \lambda_{i_1}\dotsm \lambda_{i_{n-1}}S \left(K_{i_1}, \dotsc , K_{i_{n-1}}, \omega\right)
\]
is satisfied for $K_1, \dotsc, K_{\ell} \in \mathcal{K}_n$ and for every Borel subset $\omega \subseteq \mathbb{S}^{n-1}$. Moreover, for $K_1, \dotsc, K_n \in \mathcal{K}_n$, the mixed volume $\V\left(K_1, \dotsc,K_n\right)$ can be expressed in terms of the mixed surface area measure in the following way
\[
\V\left(K_1, \dotsc,K_n\right) = \frac{1}{n}\int_{\S^{n-1}}\phi_{K_1}(u)S\left(K_2, \dotsc,K_n, u \right).
\]
\end{theorem/Def}
We make the following remarks.
\begin{rem}
(1) Setting $K = K_1 = \dotsb = K_n$ we get 
\begin{align*}
\vol_n(K) & = \V(K, \dotsc, K) \\ & =  \frac{1}{n}\int_{\S^{n-1}}\phi_{K}(u)S\left(K, \dotsc,K, u \right) \\ & =  \frac{1}{n}\int_{\S^{n-1}}\phi_{K}(u)S_{n-1}\left(K, u \right)
\end{align*}
as in Equation (\ref{volumeformula1}).

\noindent (2) The mixed volume \enquote{$\V(\cdot)$} defined above is related to the Mixed Volume \enquote{$\MV(\cdot)$} from Equation \ref{def:mixedvolume} by the formula 
\[
\V\left(K_1, \dotsc,K_n\right) = \frac{1}{n!}\MV\left(K_1, \dotsc,K_n\right)
\]
for $K_1, \dotsc, K_n \in \mathcal{K}_n$. 
\end{rem}

Now, let $K \subseteq M_{\R}$ be a convex set. Then the support function $\phi_K$ of $K$ corresponds to a nef toric $b$-divisor $\D_K$. The following lemma follows from the definition of the surface area measure and Theorem \ref{the:volume}.
\begin{lemma}
Let us fix an identification $N_{\R} \simeq \R^n$. Let $\D_1, \dotsc, \D_{n}$ be a collection of nef toric $b$-divisors associated to convex sets $K_i \subseteq M_{\R} \simeq \R^n$ with corresponding support functions $\phi_i$, for all $i =1, \dotsc, n$. Then the mixed degree $\D_1 \dotsm \D_m$ is related to the mixed surface area measure $S_{n-1}(K_1, \dotsc, K_{n-1}, \cdot)$ by the formula 
\[
\D_1 \dotsm \D_{n} = (n-1)!\int_{\S^{n-1}}\phi_1(u) S(K_2, \dotsc, K_n,u).
\]
Moreover, by the symmetry of the mixed surface area measure, we have integral formulae  
\[
\D_1 \dotsm \D_{n} = (n-1)!\int_{\S^{n-1}}\phi_i(u) S(K_1, \dotsc, \widehat{K}_i,\dotsc, K_n,u)
\]
for all $i = 1, \dotsc, n$. 
\end{lemma}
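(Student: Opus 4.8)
The plan is to chain together Theorem \ref{the:volume}, the normalization identity relating the two notions of mixed volume, and the integral representation of the mixed volume $\V$ provided by Theorem/Definition \ref{mixedvolume2}. First I would apply Theorem \ref{the:volume}: each $\D_i$ is a nef toric $b$-divisor whose associated concave function is the support function $\phi_i$, and the stability set of a support function $\phi_{K_i}$ is exactly $K_i$ by Remark \ref{gotitas} (since $\phi_{K_i} = \iota_{K_i}^{\vee}$ forces $K_{\phi_i} = K_i$). Hence the mixed degree exists and
\[
\D_1 \dotsm \D_n = \MV(K_1, \dotsc, K_n).
\]

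Next I would pass from $\MV$ to $\V$ using the remark following Theorem/Definition \ref{mixedvolume2}, namely $\MV(K_1, \dotsc, K_n) = n!\, \V(K_1, \dotsc, K_n)$, and then substitute the integral formula of that same theorem,
\[
\V(K_1, \dotsc, K_n) = \frac{1}{n}\int_{\S^{n-1}}\phi_1(u)\, S(K_2, \dotsc, K_n, u).
\]
Collecting the constants via $n!\cdot \tfrac{1}{n} = (n-1)!$ then yields the first displayed identity of the lemma directly.

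For the remaining family of formulae I would argue by symmetry. The mixed degree $\D_1 \dotsm \D_n$ is symmetric in its arguments, being a limit (in the sense of nets) of the top intersection numbers $D_{1_{\Sigma'}} \dotsm D_{n_{\Sigma'}}$, each of which is symmetric; symmetry is therefore inherited in the limit. Since both $\V$ and the mixed surface area measure $S$ are also symmetric in their arguments by Theorem/Definition \ref{mixedvolume2}, for each fixed $i$ I may reorder the entries so that $K_i$ (equivalently $\phi_i$ and $\D_i$) occupies the distinguished first slot, and the computation above applied to this reordering produces the claimed integral formulae. I expect no genuine obstacle here: the entire content is bookkeeping of the two normalizations together with the identification $K_{\phi_i} = K_i$, the only subtle point being the passage of symmetry to the limit, which is immediate from the symmetry of each approximating intersection number.
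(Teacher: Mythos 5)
Your proposal is correct and follows essentially the same route the paper intends: the paper gives no written proof but states that the lemma follows from Theorem \ref{the:volume} and the definition of the (mixed) surface area measure, which is precisely the chain you carry out — $\D_1\dotsm\D_n = \MV(K_1,\dotsc,K_n) = n!\,\V(K_1,\dotsc,K_n) = (n-1)!\int_{\S^{n-1}}\phi_1(u)\,S(K_2,\dotsc,K_n,u)$, with the remaining formulae from the symmetry of $\V$ and $S$. The constants check out and the identification $K_{\phi_i}=K_i$ via Remark \ref{gotitas} is the right justification.
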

\begin{rem}\label{rem:smoothcase}
Assuming some smoothness conditions on the support functions of the convex sets, one can compute integrals with respect to (mixed) surface area measure measures explicitly in terms of Lebesgue measures of determinants of Hessians of smooth functions (see \cite[Corollary 2.5.3]{BM} and the results in \cite[Section 5.3]{BM}).
\end{rem}

\section{Volumes and intersection numbers}\label{section-volume}

Throughout this section, $N$ will denote a lattice of rank $n$ and $M = N^{\vee}$ its dual lattice. All convex sets in $M_{\R}$ are assumed to be compact, bounded and full-dimensional. We fix two convex sets $K_1 \subseteq K_2$ in $M_{\R}$.

The goal of this section is to relate the correction terms of Definition \ref{correctionterm} with intersection numbers of toric $b$-divisors in the case that $K_2$ is a polytope.

Note that two related exposed faces $F_1 \sim F_2$ with $F_1 \leq K_1$ and $F_2 \leq K_2 $ are contained in parallel hyperplane sections (defined by the $v$ given in the proof of Theorem \ref{convexdecom}). The following is a key Lemma. 
\begin{lemma}\label{lemata}
Let $F_1, \, F_2 \subseteq \R^{d+1}$ be polytopes. Here, $d = \max\left\{\dim\left(F_1\right), \dim\left(F_2\right)\right\}$. Assume that $F_1 \subseteq \{x_{d+1} = 0\}$ and that $F_2 \subseteq \{x_{d+1} = 1\}$. Then the volume of the convex hull of $F_1, F_2$ is given by  
\[
\vol\left(\convhull\left(F_1, F_2\right)\right) = \frac{1}{d+1}\sum_{i=0}^d\MV\left(\underbrace{F_1, \dotsc, F_1}_{i \timess}, \underbrace{F_2, \dotsc, F_2}_{(d-i) \timess}\right).
\]
\end{lemma}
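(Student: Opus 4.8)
The plan is to compute the $(d+1)$-dimensional volume by Cavalieri's principle, slicing $\convhull(F_1,F_2)$ by the parallel hyperplanes $H_t \coloneqq \{x_{d+1}=t\}$ for $t \in [0,1]$. First I would identify each slice. Writing an arbitrary point of the convex hull as a convex combination of points of $F_1 \times \{0\}$ and $F_2 \times \{1\}$, its last coordinate equals the total weight assigned to the points of $F_2 \times \{1\}$, say $t$; regrouping the two families of weights into genuine convex combinations of $F_1$ and of $F_2$, and using that $F_1, F_2$ are convex, one sees that
\[
\convhull(F_1,F_2) \cap H_t = (1-t)F_1 + t F_2,
\]
viewed inside $H_t \cong \R^d$, where the right-hand side is a Minkowski combination. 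Here I would note that $d = \max\{\dim F_1, \dim F_2\}$ forces $\convhull(F_1,F_2)$ to be genuinely $(d+1)$-dimensional, so that $\vol$ is the honest ambient Lebesgue measure; the boundary slices $t=0,1$ may drop dimension, but they form a null set in $t$ and so do not affect the integral.

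By Fubini's theorem this gives
\[
\vol\bigl(\convhull(F_1,F_2)\bigr) = \int_0^1 \vol_d\bigl((1-t)F_1 + t F_2\bigr)\, dt.
\]
The next step is to expand the integrand using the polynomiality of volume under Minkowski combinations, which is exactly the defining property of the mixed volume in Theorem/Def \ref{mixedvolume2} applied in dimension $d$ with $\ell = 2$, $\lambda_1 = 1-t$ and $\lambda_2 = t$. Collecting equal terms by the symmetry of $\V$ yields
\[
\vol_d\bigl((1-t)F_1 + t F_2\bigr) = \sum_{i=0}^d \binom{d}{i}(1-t)^i t^{d-i}\, \V(\underbrace{F_1, \dotsc, F_1}_{i \timess}, \underbrace{F_2, \dotsc, F_2}_{(d-i) \timess}).
\]

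Finally I would integrate term by term. The only analytic input is the Beta integral
\[
\int_0^1 (1-t)^i t^{d-i}\, dt = \frac{i!\,(d-i)!}{(d+1)!},
\]
after which each coefficient simplifies to $\binom{d}{i}\tfrac{i!(d-i)!}{(d+1)!} = \tfrac{1}{d+1}$, independently of $i$. The sum therefore collapses to $\tfrac{1}{d+1}\sum_{i=0}^d \V(\dotsc)$, which is the asserted identity. I expect the main obstacle to lie entirely in the first step, namely the correct identification of the slice as $(1-t)F_1 + t F_2$ together with the degenerate-dimension bookkeeping (the mixed volumes remain well defined and vanish appropriately when one of $F_1, F_2$ fails to be full-dimensional in $\R^d$); the expansion and the Beta integral are routine. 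One normalization point to track is that the expansion produces the normalized mixed volume $\V$ of Theorem/Def \ref{mixedvolume2}, related to $\MV$ by $\V = \tfrac{1}{n!}\MV$, so the statement is to be read with the mixed volume taken in this normalized form.
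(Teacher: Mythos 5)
Your proof is correct and follows essentially the same route as the paper's: identify the slice at height $t$ as the Minkowski combination $(1-t)F_1+tF_2$, expand its volume polynomially in mixed volumes, and integrate term by term via the Beta integral $\int_0^1 t^{d-i}(1-t)^{i}\,\d t=\bigl((d+1)\binom{d}{i}\bigr)^{-1}$. Your closing remark on normalization is also well taken: the polynomial expansion naturally produces $\V$ rather than $\MV = d!\,\V$, a discrepancy the paper's own Claim 2 glosses over.
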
 
\begin{proof}
We start with the following three claims: 

\noindent \emph{Claim 1:} Let $\lambda$ be any real number between $0$ and $1$. Then the slice of the convex hull of $F_1$ and $F_2$ at $x_{d+1} = \lambda$ is given by  
\[
\convhull\left( F_1, F_2\right) \cap \{x_{d+1} = \lambda \} = \lambda F_1 + (1-\lambda)F_2, 
\]
where the sum in the right hand side is the Minkowski sum of convex sets. 

\noindent \emph{Claim 2}: Let $\lambda$ be any real number between $0$ and $1$. Then it follows that the volume of the slice $\lambda F_1 + (1-\lambda)F_2 \subseteq \convhull\left( F_1, F_2\right)$ is given by   
\[
\vol\left(\lambda F_1 + (1-\lambda)F_2\right) = \sum_{i=0}^d \binom{d}{i}\lambda^i(1-\lambda)^{d-i}\MV\left(\underbrace{F_1, \dotsc, F_1}_{i \timess}, \underbrace{F_2, \dotsc, F_2}_{(d-i)\timess}\right).
\]
\noindent \emph{Claim 3}: Let $\lambda$ be as before and let $\ell, \, k$ be two non-negative integers with $k\leq \ell$. We define the number $\I(\ell,k)$ by   
\[
\I(\ell,k) \coloneqq \int_0^1\lambda^k (1-\lambda)^{\ell-k} \, \d\lambda.
\]
Then the formula
\[
\I(\ell,k) = \left((\ell+1)\binom{\ell}{k}\right)^{-1}
\]
holds true.

Now, Claim 1 is clear and Claim 2 is a standard result in convex geometry. We proceed to give a proof of Claim 3: integrating by parts, we get  
\begin{align*}
\I(\ell,k) &= \int_0^1\lambda^k(1-\lambda)^{\ell-k}\,\d\lambda \nonumber \\ &= \frac{\lambda^{k+1}(1-\lambda)^k}{k+1}\,\bigg|^1_0+ \int_0^1\frac{\lambda^{k+1}}{k+1}(\ell-k)(1-\lambda)^{\ell-k+1}\,\d\lambda \nonumber \\ &= \frac{\ell-k}{k+1}\;\I(\ell,k+1).
\end{align*}
Moreover the values for $k=\ell$ and for $k=0$ are given by 
\begin{align*}
\I(\ell,\ell) &= \int_0^1\lambda^{\ell}\,\d\lambda = \frac{\lambda^{\ell+1}}{\ell+1}\,\Big|_0^1 = \frac{1}{\ell+1}, \\
\I(\ell,0) &= \int_0^1(1-\lambda)^{\ell} \, \d\lambda = \frac{-(1-\lambda)^{\ell+1}}{\ell+1}\,\bigg|_0^1 = \frac{1}{\ell+1}.
\end{align*}
Hence, we get 
\begin{align*}
\I(\ell,\ell-1) &= \frac{1}{\ell} \cdot \frac{1}{\ell+1},\\
\I(\ell,\ell-2) &=\frac{2}{\ell-1}\cdot \frac{1}{\ell} \cdot \frac{1}{\ell+1}, \\  &\dotsc  \\  
\I(\ell,k) &= \left((\ell+1)\binom{\ell}{k}\right)^{-1},
\end{align*}
as we wanted to show.

Finally, note that Claim 1, Claim 2 and Claim 3 imply that   
\begin{align*}
\vol\left(\convhull\left(F_1, F_2\right)\right) &= \int_0^1\vol\left(\lambda F_1 + (1-\lambda) F_2\right)\,\d\lambda \\
&= \int_0^1\sum_{i=0}^d \binom{d}{i}\lambda^i(1-\lambda)^{d-i}\,\MV\left(\underbrace{F_1, \dotsc, F_1}_{i \timess}, \underbrace{F_2, \dotsc, F_2}_{(d-i)\timess}\right)\d\lambda \\
&= \sum_{i=0}^d\binom{d}{i}\,\I(d,i)\,\MV\left(F_1, \dotsc, F_1, F_2, \dotsc, F_2\right) \\ &= \frac{1}{d+1}\sum_{i=0}^{d}\MV\left(F_1, \dotsc, F_1, F_2, \dotsc, F_2\right),
\end{align*}
concluding the proof of the lemma.
\end{proof}
Consider the convex sets $K_1 \subseteq K_2$ with corresponding support functions $\phi_1, \phi_2$. Moreover,  let  $\Sigma = \Sigma_{K_2 \setminus K_1} \subseteq N_{\R}$ be a difference conical subdivision. 

We have the following theorem.
\begin{theorem}\label{ii}
Let notations be as above and assume that $K_2$ is a polytope. Then the functions $\phi_1$ and $\phi_2$ correspond respectively to nef toric $b$-divisors $\D_1$ and $\D_2$ on the normal fan of $K_2$, denoted by $\Sigma_{K_2}$. We can express the difference $\D_2^n - \D_1^n$ of degrees of the nef toric $b$-divisors $\D_1$ and $\D_2$ as a finite sum of correction terms 
\[ 
\D_2^n - \D_1^n = \sum_{F \overset{\exposed}{\leq} K_2} c_F,
\]
where the correction terms $c_F$ are related to top intersection numbers of toric $b$-divisors by
\begin{align*}
c_F & = \sum_{i=0}^{n-1}(n-1)!\int_{\relint(\sigma_F)\cap \S^{n-1}}\left(\phi_1(u)-\phi_2(u)\right)S(\underbrace{K_1, \dotsc, K_1}_{i \timess}, \underbrace{K_2, \dotsc, K_2}_{(n-1-i) \timess},u),
\end{align*}
where $S(\cdot )$ is the mixed surface area measure defined in the previous section.
In particular, if $K_1$ is also polyhedral, then $\Sigma$ is a real rational, polyhedral fan and we get 

\[ 
c_F = \sum_{i=0}^{n-1} \sum_{\overset{r \in \text{relint}(\sigma_F)}{r \in \Sigma(1)}}\left(\phi_1(r)-\phi_2(r)\right) \D_1^{n-1-i}\D_2^i\ D_r,
\]
where $D_r$ is the divisor corresponding to the ray $r \in \Sigma(1)$ and all the intersection products are done in $\Sigma$. 
 
\end{theorem}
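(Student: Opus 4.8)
The plan is to reduce the statement to a volume computation, decompose that volume over the normal fan of $K_2$ using the correction sets of Definition \ref{correctionterm}, and then identify each localized piece with a surface-area integral via the slicing Lemma \ref{lemata}.

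First I would pass from intersection numbers to volumes. By Theorem \ref{the:volume} the nef toric $b$-divisors $\D_1,\D_2$ attached to $\phi_1,\phi_2$ are integrable with $\D_i^n = n!\,\vol(K_i)$, so that
\[
\D_2^n - \D_1^n = n!\bigl(\vol(K_2)-\vol(K_1)\bigr) = n!\,\vol(K_2\setminus K_1).
\]
By Theorem \ref{convexdecom} and the remark following Definition \ref{correctionterm}, the correction sets give a convex decomposition $K_2\setminus K_1 = \bigcup_{F\overset{\exposed}{\leq}K_2}K_F$ into pieces with disjoint interiors, and $n!\,\vol(K_F)=c_F$ by definition. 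Since $K_2$ is a polytope there are only finitely many exposed faces $F$, so this already yields the finite sum $\D_2^n - \D_1^n = \sum_F c_F$, and it remains to prove the integral formula for each individual $c_F$.

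Second, I would establish the localized volume formula. For a fixed exposed face $F$ of $K_2$ I would slice the region $K_F$ over $F$ by the normal directions $u\in\relint(\sigma_F)\cap\S^{n-1}$: for such a $u$ the face $F$ of $K_2$ and the related face $F_{1,u}$ of $K_1$ lie in parallel supporting hyperplanes whose separation in direction $u$ equals $\phi_1(u)-\phi_2(u)\ge 0$ (nonnegative since $K_1\subseteq K_2$). Applying Lemma \ref{lemata} to the convex hull of these two parallel faces writes its volume as an average of mixed volumes of $i$ copies of the $K_1$-face and $n-1-i$ copies of the $K_2$-face, weighted by that separation, with the sum over $i=0,\dots,n-1$ producing exactly the sum in the theorem. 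Integrating these contributions over $u\in\relint(\sigma_F)\cap\S^{n-1}$ and using Theorem/Definition \ref{mixedvolume2} to recognize the mixed volumes of the faces as the masses of the mixed surface area measures $S(K_1^i,K_2^{n-1-i},\cdot)$ localized to $\relint(\sigma_F)$ yields, after the normalization constants of Lemma \ref{lemata} and Theorem/Definition \ref{mixedvolume2} combine into the factor $(n-1)!$, the asserted formula for $c_F$. As a cross-check one can instead prove the global identity $n!\,\vol(K_2\setminus K_1) = (n-1)!\sum_i\int_{\S^{n-1}}(\phi_1-\phi_2)\,dS(K_1^i,K_2^{n-1-i})$ by telescoping the multilinear expansion of $\vol(K_2)-\vol(K_1)$ in mixed volumes and repeatedly applying the integral representation of Theorem/Definition \ref{mixedvolume2}, and then split the integral using the almost-everywhere partition $\S^{n-1}=\bigsqcup_F(\relint(\sigma_F)\cap\S^{n-1})$ furnished by the normal fan of $K_2$, matching the $F$-piece with $n!\,\vol(K_F)$.

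Finally I would treat the polyhedral specialization and address the obstacle. If $K_1$ is also a polytope then $\Sigma=\Sigma_{K_2\setminus K_1}$ is a rational polyhedral fan refining both normal fans, and each $S(K_1^i,K_2^{n-1-i},\cdot)$ becomes atomic, supported on the rays $r\in\Sigma(1)$; the mass at $r$ is the $(n-1)$-dimensional mixed volume of the faces of $K_1,K_2$ exposed by $r$ computed in $r^{\perp}$, which by the standard toric dictionary equals the intersection number $\D_1^{n-1-i}\D_2^i D_r$, and substitution collapses the integral to the finite sum $c_F=\sum_{i=0}^{n-1}\sum_{\overset{r\in\relint(\sigma_F)}{r\in\Sigma(1)}}(\phi_1(r)-\phi_2(r))\,\D_1^{n-1-i}\D_2^i D_r$, the reindexing $i\mapsto n-1-i$ absorbing the apparent swap of exponents. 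The \emph{main obstacle} is the per-face localization: the global telescoping identity only controls the total sum, so one must genuinely show that the portion of each surface-area integral over $\relint(\sigma_F)\cap\S^{n-1}$ equals precisely $n!\,\vol(K_F)$, which forces the careful frustum bookkeeping of Lemma \ref{lemata}, including the lower-dimensional hulls $\convhull(F,F_{1,\tau})$ that contribute zero volume and the measure-zero cone boundaries. A secondary difficulty is that Lemma \ref{lemata} is stated only for polytopes, so for non-polyhedral $K_1$ the faces $F_{1,\tau}$ need not be polytopes; I would resolve this by approximating $K_1$ from inside by polytopes $K_1^{(m)}$ and passing to the limit, invoking continuity of volumes and mixed volumes together with weak convergence of the mixed surface area measures so that both sides of the per-face identity converge to their non-polyhedral counterparts.
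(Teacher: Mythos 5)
Your plan is correct, but your primary route is genuinely different from the paper's, and the two are worth contrasting. The paper's proof is essentially two lines: it expands $\D_2^n-\D_1^n=\sum_{i=0}^{n-1}(\D_2-\D_1)\D_1^{i}\D_2^{n-1-i}$ by multilinearity of the mixed degree, converts each summand into a sphere integral against the mixed surface area measure via the lemma at the end of Section \ref{section-measures}, and then splits $\S^{n-1}$ into the finitely many pieces $\relint(\sigma_F)\cap\S^{n-1}$ coming from the normal fan of the polytope $K_2$ --- i.e.\ exactly the argument you relegate to a ``cross-check''. What that global telescoping does \emph{not} do is verify that the $F$-localized integral equals $c_F=n!\,\vol(K_F)$ for each individual $F$; it only controls the total sum, and the paper asserts the per-face matching without proof (notably, Lemma \ref{lemata} is announced as ``key'' but is never invoked in the paper's proof). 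Your main route --- slicing $K_F$ over $u\in\relint(\sigma_F)\cap\S^{n-1}$, applying Lemma \ref{lemata} to each frustum $\convhull(F,F_{1,u})$ scaled by the separation $\phi_1(u)-\phi_2(u)$, and reassembling via Theorem/Definition \ref{mixedvolume2} --- is precisely the missing per-face argument, so your version buys completeness at the cost of the bookkeeping you correctly identify as the main obstacle. Two points to handle carefully if you write this up: the hulls $\convhull(F,F_{1,u})$ are individually lower-dimensional and overlap as $u$ varies, so the ``integration over $u$'' is not a literal Fubini over disjoint slices but must go through the identification of the localized mass of $S(K_1^{i},K_2^{n-1-i},\cdot)$ with mixed volumes of exposed faces (atomic only when $K_1$ is polyhedral, whence your approximation step, for which you should also note that the cone boundaries carry no mass in the limit); and the sign conventions need fixing once and for all, since with the paper's infimum convention $\phi_1\geq\phi_2$ on $N_{\R}$ while the telescoped factor is $\phi_2-\phi_1$, so the orientation of the Gauss map and of $S_{n-1}$ must be chosen consistently for the integrand $\phi_1(u)-\phi_2(u)$ to come out nonnegative as your geometric separation argument demands.
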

\begin{proof}
The first and last statement of the theorem follow from Theorem \ref{convexdecom} and Definition \ref{correctionterm}. For the statement regarding the expression of the correction terms in terms of intersection numbers, we have   
\begin{align*}
\phi_2^n - \phi_1^n &= \left(\phi_2-\phi_1\right)\sum_{i=0}^{n-1}\phi_1^i\phi_2^{n-1-i} \\
&= \sum_{F \overset{\exposed}{\leq} K_2}\sum_{i=0}^{n-1}(n-1)!\int_{\relint(\sigma_F)\cap \S^{n-1}}\left(\phi_1( u)-\phi_2(u)\right)S(\underbrace{K_1, \dotsc, K_1}_{i \timess}, \underbrace{K_2, \dotsc, K_2}_{(n-1-i) \timess},u), 
\end{align*}
concluding the proof of the theorem.

\end{proof}
\begin{exa}\label{monja}
Consider the fan of $ \P^2$ and the nef toric $b$-divisors $\phi_1$ and $\phi_2$ given by the concave functions $\phi_1, \phi_2 \colon \R^2 \to \R$ defined by
\[
\phi_1(a,b) = \begin{cases}\frac{ab}{a+b} \, ,&  \hspace{1cm} a,b \in \R_{\geq 0}, \\ \min \{0,a,b\} \, ,& \hspace{1cm} \text{otherwise}. \end{cases}
\]
and 
\[
\phi_2(a,b) =  \min \{0,a,b\} 
\]
and consider the corresponding convex sets $K_1 \subseteq K_2$. Note that $K_2$ is the $2$-dimensional symplex and $K_1$ is the convex set from Example \ref{exa:folliation}. 
The only face of the simplex $K_2$ whose associated correction term is non-zero is the vertex $F_0$ in Figure \ref{2dd}. 
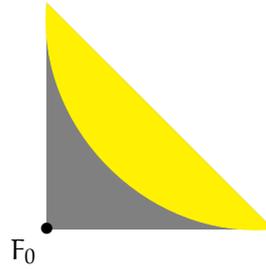
\begin{figure}[H]
\begin{center}
\begin{tikzpicture}[scale=3]
\filldraw[gray] (0,0)--(1,0)--(0,1)--cycle;
\filldraw[yellow] (0,1) to [bend right = 50](1,0) to [bend left= 0](0,1) --cycle;
\draw (0,0) node[below left]{$F_0$};
\draw (0,0) node{$\bullet$};
\end{tikzpicture}
\end{center}
\caption{Convex sets $K_1 \subseteq K_2$}\label{2dd}
\end{figure}
On the one hand, we can calculate the difference $\phi_2^2 - \phi_1^2$ as the difference of volumes of convex sets   
\[
c_{F_0} = \text{ full correction term } = \phi_2^2-\phi_1^2 = 2 \, \vol\left(K_2\right) - 2 \, \vol\left(K_1\right)= 1 - \frac{2}{3} = \frac{1}{3}.
\]
On the other hand, Theorem \ref{ii} tells us that we can compute the correction term $c_{F_0}$ by 
\[
c_{F_0} = \int_0^{\pi/2}\phi_1(\theta)S_1\left(K_1,\theta\right) = \frac{1}{3},
\]
where the last equality follows from a computation using Remark \ref{rem:smoothcase}. 

\end{exa} 

\printbibliography
\end{document}